\newtheorem{theorem}{Theorem}[section]
\newtheorem{lemma}[theorem]{Lemma}
\newtheorem{corollary}[theorem]{Corollary}
\newtheorem{definition}[theorem]{Definition}
\newtheorem*{theorem*}{Theorem}
\newcommand{\forceP}{\mathbb{P}}
\newcommand{\forceQ}{\mathbb{Q}}
\newcommand{\ZFC}{\mathsf{ZFC}}
\newcommand{\ZF}{\mathsf{ZF}}
\newcommand{\ZFP}{\mathsf{ZF}^-}
\newcommand{\PFA}{\mathsf{PFA}}
\newcommand{\BPFA}{\mathsf{BPFA}}
\newcommand{\PD}{\mathsf{PD}}
\def\undertilde#1{\mathord{\vtop{\ialign{##\crcr
$\hfil\displaystyle{#1}\hfil$\crcr\noalign{\kern1.5pt\nointerlineskip}
$\hfil\tilde{}\hfil$\crcr\noalign{\kern1.5pt}}}}}
\title{A Universe with large Continuum, global $\Sigma$-Uniformization and a projective Well-Order of its Reals}
\author{ Stefan Hoffelner$^{1}$\footnote{This research was funded in whole by the Austrian Science Fund (FWF) Grant-DOI 10.55776/P37228. }  }
\date{
    $^1$TU Wien \\
    \today }
\begin{document}

\maketitle

\begin{abstract}
We construct a model in which the continuum has size $\kappa$ for a regular cardinal $\kappa$ and in which the $\Sigma^1_n$-uniformization property holds simultaneously for every $n \ge 2$. Additionally this model has a $\Delta^1_3$-definable well-order of its reals.
\end{abstract}

\section{Introduction}

This work investigates universes whose definable sets of reals display a behaviour different than the one under the standard assumption of projective determinacy ($\mathsf{PD})$. Recall that the projective sets are the subsets of the reals which are definable over the structure $(H(\omega_1), \in )$ using first order formulas. For an arbitrary set of reals $A \subset \omega^{\omega}$, we can consider the two player game $G_A$, whose players, starting with player I, alternately play natural numbers in each turn, hence producing a real $r$ in an infinite amount of time. We say that player I wins the game, if the real $r$ is an element of the fixed set $A$. Otherwise player II wins. A strategy is a device which, given an arbitrary state of the game, tells the player which number to play next and a winning strategy of a player is a strategy which will always yield a win in the game for the player. Then $\PD$ is the assertion that, given any projective set $A \subset \omega^{\omega}$, either player I or player II has a winning strategy. 

It is well-known that $\PD$ yields a very rich and interesting theory of the projective subsets of the real numbers, deciding all the relevant questions, otherwise known to be independent of the axioms $\ZFC$. This, and the fact that both types of additional axioms set theorists typically use to strengthen the standard axioms, namely large cardinal axioms and forcing axioms, outright imply the truth of projective determinacy form very convincing evidence that $\PD$ is the right theory for the definable subsets of real numbers (see the very well-known \cite{Steel}, \cite{MS}, theorem 13.6.). 

Yet it is still interesting to investigate universes in which the definable subsets of reals act very differently than under the assumption of $\PD$. On the one hand, because set theorists always want to know which properties are still consistent with $\ZFC$ or $\ZFC$ and its strengthenings, and on the other hand as often fragments of the universe are used to investigate the real set theoretic universe itself. And these fragments will typically have a different theory of its local sets of reals than the surrounding universe.

This article deals with two notions which are ruled out by $\PD$. The first notion is the projectively definable well-order of the real numbers, and the second one is global $\Sigma$-uniformization. 
Given $A \subset \omega^{\omega} \times \omega^{\omega}$, we say that $f$ is a uniformization (or a uniformizing function) of
$A$ if $f$ is a partial function such that 
\[dom(f)= pr_1(A)= \{ x \in 2^{\omega} \, : \, \exists y ((x,y) \in A \} \] and the graph of $f$ is a subset of $A$.
\begin{definition}[Uniformization Property]
 We say that the projective pointclass $\Gamma \in \{\Sigma^1_n \mid n \in \omega\} \cup \{\Pi^1_n \mid n \in \omega\}$ has the uniformization
 property iff every $\Gamma$-set in the plane admits a uniformization 
 whose graph is in  $\Gamma$, i.e. the relation $(x,y) \in f$ is in $\Gamma$.
 \end{definition}
 
Due to Moschovakis (see \cite{Kechris} 39.9) $\PD$ implies that $\Pi^1_{2n+1}$ and $\Sigma^1_{2n+2}$-sets have the uniformization property. By the Martin-Steel theorem (see \cite{MS}, Theorem 13.6.), the assumption of infinitely many Woodin cardinals outright implies $\PD$, and hence large cardinals fully settle the behaviour of the uniformization property within the projective hierarchy. The size of the continuum can be arbitrarily changed, modulo the few constraints imposed by $\ZFC$, without altering  the zig-zag pattern of the uniformization property within the projective hierarchy.

The connection of $\PD$ with forcing axioms is established via Woodin's core model induction (see \cite{SchindlerSteel}). Under the assumption of the proper forcing axiom ($\PFA$), Schimmerling, Steel and Woodin showed  that $\PD$ is true (in fact much more is true, see \cite{Steel}, where it is shown that $\PFA$ implies that $\mathsf{AD}^{L(\mathbb{R} ) }$ holds), thus also under $\PFA$ the $\Pi^1_{2n+1}$-uniformization and $\Sigma^1_{2n}$-uniformization hold. Note that $\PFA$ implies that the continuum is $\aleph_2$.

Global $\Sigma$-uniformization is the statement that for all $n \ge 2$, $\Sigma^1_n$-sets have the uniformization property. It is well-known that $L$ satisfies global $\Sigma$-uniformization, which follows from the fact that the canonical well-order of the constructible reals is not only $\Delta^1_2$-definable but also a \emph{good} well-order. Recall that a $\Delta^1_n$-definable well-order $<$ of the reals is good if $<$ has ordertype $\omega_1$ and the relation $<_{I} \subset (\omega^{\omega})^2$ defined via
\[ x <_I y \Leftrightarrow \{(x)_n \, : \, n \in \omega\}= \{z \, : \, z < y\} \]
where $(x)_n$ is some fixed recursive partition of $x$ into $\omega$-many reals, is a $\Delta^1_n$-definable relation. Addison (see \cite{Addison}) observed that $\Delta^1_n$-definable well-orders imply $\Sigma^1_m$-uniformization for every $m \ge n$.

For a long time, good projective well-orders were the only tool to derive global $\Sigma$-uniformization, hence the question of whether global $\Sigma$-uniformization is consistent with the continuum being arbitrarily large remained open. Goal of this work is to prove that 
\begin{theorem}
Assume $V=L$ and let $\kappa$ be a regular cardinal. Then there is a generic extension of $L$ where $2^{\aleph_0}= \kappa$ and the $\Sigma^1_n$-uniformization property holds for every $n \ge 2$ simultaneously.
\end{theorem}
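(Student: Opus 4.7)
The plan is to produce the desired model as a class-forcing extension of $L$ carried out in two stages. The first stage is a Friedman-style preparation that equips the ground model with a $\Delta^1_3$-definable well-order of the reals together with an almost-disjoint coding apparatus on $\omega_1$; it is arranged so that the eventual continuum is exactly $\kappa$, either by blowing up $2^{\aleph_0}$ during the preparation via Cohen-like reals suitably tagged for later coding, or by leaving room for the second stage to do so while preserving the definability of the well-order.

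The second stage installs $\Sigma^1_n$-uniformization level by level. The case $n=2$ is a $\ZFC$-theorem via Kondo's theorem on $\Pi^1_1$-uniformization, so the real work begins at $n\ge 3$. For each such $n$, the iteration enumerates pairs $(\varphi,x)$, where $\varphi(v_0,v_1)$ is a $\Sigma^1_n$-formula and $x$ is a real with $\exists y\,\varphi(x,y)$, and codes a chosen witness $y^{\varphi}_{x}$ into the generic via almost-disjoint coding against the prepared family. The coding is designed so that the predicate $y=y^\varphi_x$ is itself expressible as a $\Sigma^1_n$-statement; this upgrades the canonical uniformization $x\mapsto y^\varphi_x$ from the naive $\Delta^1_{n+1}$ bound (obtained by taking the $\Delta^1_3$-least witness) to the desired $\Sigma^1_n$ bound.

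The principal obstacle is making the $\Sigma^1_n$-definability of the coded witnesses genuinely uniform across all $n\ge 3$ and robust under the rest of the iteration. This is handled by a David's-trick style argument: the almost-disjoint coding is arranged so that from $y^\varphi_x$ alone one can locally recover, inside a suitable countable transitive model of $\ZFP$, the entire relevant fragment of the decoding information, collapsing what would otherwise be a universal quantifier in the witness definition to a bounded one. Further care is needed (a) to stratify the iteration so that coding at level $n$ does not disturb witnesses fixed at lower levels, leveraging self-absoluteness of the codes; (b) to choose iterands proper (or $\omega_2$-cc) so that $\omega_1$ and all other cardinals are preserved; and (c) to bookkeep every relevant $(\varphi,x)$ despite new reals continually being added.

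The hardest step will be confirming that the definition of $y^\varphi_x$ genuinely lands in $\Sigma^1_n$ rather than slipping up to $\Sigma^1_{n+1}$: this requires a delicate interplay between the complexity of the almost-disjoint decoding, the $\Delta^1_3$ bound on the background well-order, and the $\Sigma^1_n$-quantifier structure of $\varphi$. Once the decoding calculation is in place, a standard absoluteness argument passes the $\Sigma^1_n$-graph on to the final model, and combining all levels yields the global $\Sigma$-uniformization asserted in the theorem.
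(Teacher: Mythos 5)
There is a genuine gap, and it sits exactly where you flag "the hardest step": your plan is to pick, for each pair $(\varphi,x)$ with $\varphi\in\Sigma^1_n$, a witness $y^{\varphi}_x$ at some stage of the iteration and code it so that "$y=y^{\varphi}_x$" is $\Sigma^1_n$. The coding apparatus you describe (almost disjoint coding plus a David's-trick localization) produces a predicate that is $\Sigma^1_3$ and upward absolute; this is fine for $n=3$, and indeed it is the paper's "easy strategy" for $\Sigma^1_3$-uniformization. But for $n>3$ the statement $\varphi(x,y)$ is not absolute along the iteration: later factors (Cohen reals to push the continuum to $\kappa$, further coding forcings) can destroy the fact that the chosen $y$ is a witness, can make empty sections non-empty, and can change which candidate is "least." A witness frozen and coded at stage $\beta$ may simply fail to lie in the final $\Sigma^1_n$ set, so the coded graph need not be a subset of $A_\varphi$ in the final model, and no absoluteness argument transports correctness "on to the final model" as you assert. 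Your proposal contains no mechanism that ties the coding decisions to the \emph{final-model} truth of the $\Sigma^1_n$ statements, and that mechanism is the actual content of the theorem.

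The paper's solution is structurally different from "code the witness." It never codes $y^{\varphi}_x$ directly; instead it fixes, for each formula $\varphi_m$ and each $x$, a well-ordered list of candidate triples $(x,y^\alpha,a_0^\alpha)$, fixes coherent bijections $\pi_\alpha:(2^{\aleph_0})^\alpha\to 2^{\aleph_0}$, and uses the bookkeeping to visit, cofinally often, every tuple $(x,m,\alpha,b_1,\dots)$; at each visit it decides whether to code the auxiliary tuple $(\#\psi,x,y^\alpha,a_0^\alpha,b_1,\dots)$ according to whether the infinite disjunction $\bigvee_{\eta<\alpha}\psi(x,y^\eta,a_0^\eta,b_1^\eta,\dots)$ (read off via $\pi_\alpha^{-1}$) holds at that stage, with dual clauses for even levels. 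In the final model the pattern of coded tuples then exactly mirrors the alternating quantifier structure, and the uniformizing function is defined by the mixed formulas $\sigma_{\text{odd}}/\sigma_{\text{even}}$, which interleave $\psi$ with the $\Sigma^1_3$ coding predicate at matched quantifier depth so the complexity stays at $\Sigma^1_n$. Two further mismatches with your plan: the $\Delta^1_3$ well-order is an optional extra layer in the paper, not an ingredient of the uniformization argument (with $2^{\aleph_0}=\kappa>\aleph_1$ the Addison-style "good well-order" route is unavailable, which is the whole point); and you would need finite support ccc iterations, not "proper (or $\omega_2$-cc)" iterands in general, since countable support caps the continuum at $\aleph_2$.
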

Adding an additional layer of coding forcings one can also obtain the next theorem which is the one we will prove in this article:
\begin{theorem}
Assume $V=L$ and let $\kappa$ be a regular cardinal. Then there is a generic extension of $L$ where $2^{\aleph_0}= \kappa$ and the $\Sigma^1_n$-uniformization property holds for every $n \ge 2$ simultaneously and the reals have a $\Delta^1_3$-definable well-order.
\end{theorem}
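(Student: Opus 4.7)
The plan is to proceed in two phases, in line with the remark that the statement is obtained from Theorem~1.2 by adding a further layer of coding. In phase one, using the construction promised in Theorem~1.2, I would produce over $L$ a generic extension $L^{\forceP}$ in which $2^{\aleph_0}= \kappa$ and the $\Sigma^1_n$-uniformization property holds for every $n \ge 2$ simultaneously. The forcing $\forceP$ is expected to be a $\kappa$-length iteration of proper forcings over $L$ whose building blocks alternately add reals to drive the continuum up to $\kappa$ and, guided by a $\Sigma_1$-definable book-keeping over $L_\kappa$, code, level by level, uniformizing functions for all prospective $\Sigma^1_n$-sets in the plane. The essential feature of $\forceP$ to be exported is that the uniformizing functions admit $\Sigma^1_n$-definitions which remain correct in any further generic extension that does not disturb the coded data.

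In phase two I would perform, on top of $L^{\forceP}$, a further iteration $\forceQ$ of length $\kappa$ designed to place a $\Delta^1_3$-definable well-order on the reals. The natural tool is a Jensen/Friedman-style coding: enumerate the reals of the intended final model by a book-keeping in $L^{\forceP}$, and at stage $\alpha$ code the initial segment $W_\alpha$ of the targeted well-order into the continuum via almost-disjoint coding against a $\Sigma^1_2$-definable almost-disjoint family of size $\aleph_1$ drawn from $L$; one then reshapes so that the final well-order $<_V$ admits a definition of the form \emph{there exists a real $x$ coding a witness structure verifying the $\Pi^1_2$-statement $y <_V z$}. This puts $<_V$ at complexity $\Delta^1_3$, and hence, by Addison's theorem recalled in the introduction, automatically yields $\Sigma^1_n$-uniformization for every $n \ge 3$.

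The main obstacle will be showing that the $\Sigma^1_n$-uniformization properties established in phase one survive the passage from $L^{\forceP}$ to $L^{\forceP \ast \forceQ}$. For $n \ge 3$ this is free from the new well-order, so the delicate case is $\Sigma^1_2$-uniformization, which does not follow from a $\Delta^1_3$ well-order. To control it I would design $\forceQ$ to be proper, cardinal and cofinality preserving, and absolute enough that the $\Sigma^1_2$-uniformizations built in phase one, witnessed by objects in $L^{\forceP}$, continue to be witnessed after $\forceQ$; concretely, I expect to interleave the coding steps of $\forceQ$ with refresher stages that re-establish the invariants maintained by the phase-one book-keeping, and then to prove a preservation theorem adapted to the specific coding forcing stating that the newly added reals respect the existing $\Sigma^1_2$-uniformizations. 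Verifying this preservation theorem, and with it the compatibility of the two iterations at every stage below $\kappa$, is where I expect the bulk of the technical work to lie.
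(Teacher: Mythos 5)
There is a genuine gap, and it sits at the heart of your phase-two argument. You claim that once a $\Delta^1_3$-definable well-order of the reals is forced, Addison's theorem ``automatically yields $\Sigma^1_n$-uniformization for every $n \ge 3$.'' But Addison's argument requires a \emph{good} well-order: one of ordertype $\omega_1$ whose initial-segment-coding relation $<_I$ is also $\Delta^1_n$. In the target model the continuum is $\kappa$, which may be much larger than $\aleph_1$, so no well-order of the reals can have ordertype $\omega_1$ and the Addison mechanism is unavailable. This is precisely the obstruction that motivates the paper: models with a projective well-order and large continuum have long been known (Harrington, Fischer--Friedman et al.), and if a bare $\Delta^1_3$ well-order sufficed for global $\Sigma$-uniformization there would be nothing left to prove. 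Without Addison, your phase two gives you nothing for free, and you are thrown back on showing that the uniformization established in phase one survives the second iteration. That preservation is exactly where your proposal is weakest: the uniformizing definitions produced by the coding machinery are of the form ``$\exists a_0(\dots \land \lnot(\forall a_1 \exists a_2 \dots (\text{tuple is not coded into } \vec{S})))$'', i.e.\ they contain negative clauses about which tuples are coded, and these are not upwards absolute. A separate coding iteration run afterwards, with its own bookkeeping, writes new codes into the very same $\Sigma^1_3$ predicate and can destroy the carefully arranged pattern; vague ``refresher stages'' do not address this. (Your concern about $\Sigma^1_2$-uniformization, on the other hand, is moot: it is a theorem of $\ZFC$ via Kondô's $\Pi^1_1$-uniformization.)

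The paper avoids both problems by refusing to split the construction into two phases. There is a single finite-support ccc iteration over $L[g^0\ast g^1]$ whose bookkeeping interleaves three kinds of stages: the copy-mechanism coding steps (using the coherent bijections $\pi_\alpha$) that force the $\sigma_{\text{odd}}$/$\sigma_{\text{even}}$ definitions to uniformize every $\Sigma^1_n$-set, Cohen stages that push the continuum to $\kappa$, and well-order stages that, for a pair of reals $(b_0,b_1)$, code either $(b_0,b_1)$ or $(b_1,b_0)$ into $\vec{S}$ according to the $L$-canonical order of their least names, yielding the $\Delta^1_3$ well-order ``$b_0<b_1$ iff $(b_0,b_1)$ is coded.'' Because all codes are placed by one iteration governed by one bookkeeping, the key lemma that the final model contains \emph{exactly} the intended codes (Lemma \ref{nounwantedcodes}, resting on the independence of $\vec{S}$ and Suslin-tree preservation of the coding and Cohen factors) applies to the final model as a whole, and the uniformization and the well-order are verified simultaneously rather than one after the other. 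If you want to salvage a two-phase structure, you would have to prove a preservation theorem for the negative coding clauses across phase two, which in effect amounts to re-merging the two iterations.
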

Universes with some interesting properties while having  a projectively definable well-order of the reals are a very active topic in current set theory (see e.g. \cite{Fischer Friedman Zdomskyy}, \cite{Fischer Friedman Khomskii} or \cite{Friedman Hoffelner} for a very small subset of the existing literature) and presumably started with L. Harrington's work in \cite{Harrington}.

This article employs the technique pioneered in \cite{BPFA and Global Uniformization} which uses infinitary logic and a copy mechanism to produce a method which forces global $\Sigma$-uniformization in the absence of a a good projective well-order. This technique is combined with the coding machinery from \cite{Ho2} and \cite{Ho4}, which a bit surprisingly, can also be used to find universes with a large continuum while keeping a robust coding mechanism.
There are naturally some differences though, the method in  \cite{BPFA and Global Uniformization} uses a countable support iteration. It is well-known that countable support iterations only allow a continuum of size $\le \aleph_2$, thus we are compelled to work with finite support iterations in order to obtain a universe with a large continuum.

We end with a short description of how the article is organized. Section 2 introduces the building blocks of our coding machine. It shows the existence of an $\omega_1$-length, independent sequence of Suslin trees which is definable in $L$, a result which must have been known before but we could not find a proof so we included the one from \cite{A Failure of Reduction in the Presence of Separation}. Section 3 defines coding forcings which can be used to switch a $\Sigma^1_3$-predicate from false to true. These coding forcings are as in \cite{Ho2} and \cite{Ho4}. Building on these codings we define a family of $\Sigma^1_n$-predicates for every $n \ge 3$ in section 3 which is inspired by \cite{BPFA and Global Uniformization}. These predicates can be controlled in a very precise way using iterations of the coding forcings from section 3, and will be used to produce the uniformizing functions we need when forcing the global $\Sigma$-uniformization. In section 5 we finally define the iteration which will produce the desired universe which shows our main theorem. That the iteration does what it should do is shown in section 6.

\section{Independent Suslin trees in $L$, almost disjoint coding}
The coding method of our choice utilizes Suslin trees, which can be generically destroyed in an independent way of each other. 
\begin{definition}
 Let $\vec{T} = (T_{\alpha} \, : \, \alpha < \kappa)$ be a sequence of Suslin trees. We say that the sequence is an 
 independent family of Suslin trees if for every finite set of pairwise distinct indices $e= \{e_0, e_1,...,e_n\} \subset \kappa$ the product $T_{e_0} \times T_{e_1} \times \cdot \cdot \cdot \times T_{e_n}$ 
 is a Suslin tree again.
\end{definition}
Note that an independent sequence of Suslin trees $\vec{T} = (T_{\alpha} \, : \, \alpha < \kappa)$ has the property that if $A \subset \kappa$ and we form 
$ \prod_{i \in A} T_i $
with finite support, where each $T_i$ denotes the forcing we obtain if we force with the nodes of the tree  as conditions using the tree order as the partial order, then in the resulting generic extension $V[G]$, for every $\alpha \notin A$, $V[G] \models `` T_{\alpha}$ is a Suslin tree$"$. To see this, we assume the opposite, namely there is an $\alpha \notin A$ such that $V[G]$ thinks that $T_{\alpha}$ is not Suslin anymore. We note that $V[G]$ is a generic extension of $V$ obtained with a forcing with the ccc (see \cite{Ho}, Lemma 51 for a more general argument of this type). So there is an $\aleph_1$-sized antichain of $T_{\alpha}$ in $V[G]$. But then the finitely supported product of the form $(\prod_{i \in A}  T_i )\times T_{\alpha}$ can not have the ccc, which is a contradiction to the  assumed independence of $\vec{T}$.

One can easily force the existence of independent sequences of Suslin trees with products of Jech's or Tennenbaum's forcing, or with just products of ordinary Cohen forcing. On the other hand independent sequences of length $\omega_1$ already exist in $L$. 

\begin{theorem}\label{DefinitionIndependentSequence}
Assume that $\aleph_1= \aleph_1^L$ and that $M$ is an uncountable model of $\ZF^{-} + ``\aleph_1$ exists$"$.  Then there is an independent sequence $\vec{S} = ( S_{\alpha} \mid \alpha < \omega_1)$ of Suslin trees in $L$ and the sequence $\vec{S}$ is uniformly $\Sigma_1 ( \{ \omega_1 \} )$-definable over $M$. To be more precise,
there is a $\Sigma_1$-formula $\phi$  with $\omega_1$ as the unique parameter, which does not depend on the model $M$, such that
the relation $\{ (t,\gamma) \mid \gamma < \omega_1 \land t \in T^{\gamma}  \}$, where $T^{\gamma}$ denotes the $\gamma$-th level of $T$, is definable over $M$ using $\phi$. 
\end{theorem}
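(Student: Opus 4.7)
The plan is to run the familiar $\diamondsuit$-construction of a Suslin tree inside $L$, but in parallel for all $\omega_1$ trees at once, using a single $\diamondsuit$-sequence to guess and seal potential maximal antichains of \emph{every} finite product simultaneously.

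First I would fix the canonical $\Sigma_1$-definable $\diamondsuit$-sequence $\langle A_\gamma : \gamma < \omega_1 \rangle$ in $L$, taking $A_\gamma$ to be the $<_L$-least witness to $\diamondsuit$ at stage $\gamma$, with a canonical default otherwise. Using a fixed $L$-definable coding I would view each $A_\gamma$ as a pair $(e_\gamma, X_\gamma)$ where $e_\gamma \subset \gamma$ is finite and $X_\gamma$ is a subset of the partial finite product $\prod_{i \in e_\gamma} S_i^{<\gamma}$. At each countable stage $\gamma$ I would construct the $\gamma$-th levels of the trees $S_\alpha$ simultaneously as follows: if $X_\gamma$ happens to be a maximal antichain of $\prod_{i \in e_\gamma} S_i^{<\gamma}$, then the $\gamma$-th levels of the trees $S_i$ for $i \in e_\gamma$ are chosen so that every cofinal branch that is extended at stage $\gamma$ in the product passes through an element of $X_\gamma$; all remaining nodes, together with the trees outside $e_\gamma$, are extended in the canonical minimal way (one immediate successor above each node at successor stages, one fixed cofinal branch above each node at limits).

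A standard reflection argument then delivers independence: if a finite $e \subset \omega_1$ admitted an uncountable maximal antichain $X \subset \prod_{i \in e} S_i$, a club of countable $\gamma$'s would satisfy that $X \cap \prod_{i \in e} S_i^{<\gamma}$ is maximal in the partial product, and at one such $\gamma$ the $\diamondsuit$-sequence would guess exactly this reflection; the stage-$\gamma$ sealing would then force $X$ to be countable, a contradiction.

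For the $\Sigma_1(\{\omega_1\})$-definability over an arbitrary uncountable $M \models \ZFP$ in which $\omega_1$ exists, the key observation is that the whole recursion is absolute to countable transitive models of sufficiently strong fragments of $\ZFP$. By $L$-condensation, for every $\gamma < \omega_1$ there is a countable $\beta$ such that $L_\beta$ contains $\gamma$ and carries out the construction up to stage $\gamma$ correctly. Hence ``$t$ lies at level $\gamma$ of the $\alpha$-th tree'' is equivalent to ``there is a countable $L_\beta$ containing $\alpha$ and $\gamma$ in which the formalised recursion places $t$ at level $\gamma$ of the $\alpha$-th tree'', yielding a single $\Sigma_1$-formula $\phi$ with $\omega_1$ as its only parameter. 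The main obstacle is precisely to secure enough uniformity so that $\phi$ does not depend on $M$: one has to check that the $<_L$-order, the decoding of $A_\gamma$, and the predicate ``$X$ is a maximal antichain of $\prod_{i \in e} S_i^{<\gamma}$'' all admit uniform $\Delta_1$-definitions, which ultimately reduces to the $\Sigma_1^L$-definability of the fine structure of $L$ and the absoluteness of finite product antichain-maximality between countable transitive models.
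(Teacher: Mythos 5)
Your overall route is essentially the paper's: construct the trees in $L$ by recursion on levels using the canonical $\Sigma_1(\{\omega_1\})$-definable $\diamondsuit$-sequence, seal off a maximal antichain of a finite product whenever $\diamondsuit$ guesses one, obtain independence by the usual club-reflection argument, and get the uniform $\Sigma_1(\{\omega_1\})$-definition from condensation, i.e.\ from the fact that the recursion is computed correctly inside levels of $L$ and hence inside transitive $\ZFP$-models with the correct $\omega_1$. The only organizational difference is that the paper lets the $\beta$-th tree grow along a stationary set $B_\beta$ of a definable partition and seals at limit points of $B_\beta$, while you grow all trees at every stage and decode the guess as a pair $(e_\gamma,X_\gamma)$; this difference is immaterial.

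There is, however, one step that fails as literally written: your ``canonical minimal'' extension rule puts exactly one immediate successor above each node at successor stages and one cofinal branch above each node at limits. With that rule the trees never split: between consecutive limit levels the part of the tree above any node is a single chain, so through each node there is exactly one cofinal branch available at the next limit level, and by induction each $S_\alpha$ is a union of at most countably many $\omega_1$-chains. Such trees have uncountable branches and are not Suslin (and, for the intended application, they would already carry $\omega_1$-branches in $L$, trivializing the coding machinery). You need at least two --- the paper uses infinitely many --- immediate successors at successor steps; then an uncountable branch would produce an uncountable antichain, so chains are excluded once antichains are sealed. Relatedly, at a sealing stage you must still place a level-$\gamma$ node above \emph{every} node of every $S_i$, $i\in e_\gamma$ (normality), while ensuring that every tuple of extended branches across the factors dominates a member of $X_\gamma$; your clause ``all remaining nodes are extended in the canonical minimal way'' would ruin the sealing if applied to nodes of trees with index in $e_\gamma$, and the coordinated choice of branches (possible since maximality of $X_\gamma$ in the finite product lets one handle the countably many node-tuples by a bookkeeping argument, each requirement being upward absolute once met) is exactly the extra content of the finite-product version compared to the single-tree $\diamondsuit$ construction and deserves an explicit argument.
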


\begin{proof}
We fix the canonical $\diamondsuit$-sequence $(a_{\alpha} \subset \alpha \mid \alpha< \omega_1)$ in $L$. 
Next we alter the usual construction of a Suslin tree from $\diamondsuit$ to construct an $\omega_1$-sequence of Suslin trees $\vec{T}= (T^{\alpha} \mid \alpha < \omega_1)$. We consider a partition of $\omega_1$ into $\omega_1$-many stationary  sets $\{ B_{\alpha } \mid \alpha < \omega_1 \}$ using the $\diamondsuit$-sequence. Hence we can assume that the partition is $\Sigma_1 ( \{ \omega_1 \} )$-definable over $L_{\omega_1} \subset M$.

If $\alpha$ is a limit stage, and $\beta$ is such that  $\alpha \in B_{\beta}$, then we want to construct the $\alpha+1$-th level of $T^{\beta}$, denoted by $T_{\alpha+1}^{\beta}$ under the assumption that $T_{\alpha}^{\beta}$ is already defined. 
First we assume that $\alpha$ is a not a limit point of $B_{\beta}$, then we define
$T_{\alpha+1}^{\beta}$ to be $T_{\alpha}^{\beta}$ and put infinitely many successors on each of the top nodes of $T^{\beta}_{\alpha}$.
Second we assume that $\alpha$ is a limit point of $B_{\beta}$. Then we define $T_{\alpha+1}^{\beta}$ as follows. We let $e$ be an element of $[\omega_1]^{<\omega}$ and we assume that for each $\delta \in e$, we have a tree $T^{\delta}_{\alpha}$ defined already.
We consider $a_{\alpha} \subset \alpha$. If $a_{\alpha}$ happens to be a maximal antichain $A$ in $\prod_{\gamma \in e} T^{\gamma}_{\alpha}$, then we seal that antichain off at level $\alpha+1$ for $ \prod_{\gamma \in e} T^{\gamma}_{\alpha+1}$, that is we chose $ \prod_{\gamma \in e} T^{\gamma}_{\alpha+1}$ in such a way that $A$ remains a maximal antichain in all further extensions of $ \prod_{\gamma \in e} T^{\gamma}_{\alpha+1}$.
Otherwise we just extend $T^{\beta}_{\alpha}$ via adding top nodes on countably many branches through $T^{\beta}_{\alpha}$.

We let $T^{\beta}: = \bigcup_{\alpha < \omega_1} T^{\beta}_{\alpha}$ and claim that 
$(T^{\beta} \mid \beta < \omega_1)$ is an independent sequence of Suslin trees in $L$.

Indeed, if $A \in L$ is an antichain in some $\prod_{\gamma \in e} T^{\gamma}$, then there is a constructible club $\alpha$ such that $A \cap \alpha$ is an antichain in $\prod_{\gamma \in e} T^{\gamma}_{\alpha}$. But then $A$ got sealed off in the next step of $\prod_{\gamma \in e} T^{\gamma}_{\alpha+1}$.

The definability of $\vec{S}$ comes from the fact that the canonical $\diamondsuit$-sequence in $L$ is $\Sigma_1 (\{ \omega_1 \})$-definable. We can use $L_{\omega_1}$ (which is $\Sigma_1( \{\omega_1 \} )$ to correctly define $\diamondsuit$ over it and consequentially $\vec{S}$ becomes definable over $L_{\omega_1}$ as well. The above considerations can be simulated correctly already in any transitive, uncountable $M$ which models $\ZF^{-}$, as it will compute $L_{\omega_1}$ correctly and the rest of the construction is performed inside the latter model. 
\end{proof}

We remark, that the above proof shows that countable levels of countable initial segments of $\vec{S}$ are already uniformly definable over countable transitive models $M$ of $\ZFP$ and $``\aleph_1$ exists and $\aleph_1= \aleph_1^L"$.
That is, if $M$ is as just stated and $\gamma, \eta<\aleph_1^M$ and $t \in T^{\gamma}$ such that $\phi(t,\gamma,\omega_1)$ is true in $L$, then $M \models \phi (t \upharpoonright \omega_1^M,\gamma,\omega_1)$ and vice versa. Indeed if $M$ is such a model, and $ M \cap \omega_1=\aleph_1^M= (\aleph_1^L)^M$, then
the canonical $\diamondsuit$-sequence can be computed correctly up to $\aleph_1^M$ inside $M$'s version of $L$. So the whole construction of $\vec{S}$ inside $L_{\omega_1}$, when repeated inside $L_{\aleph_1^M}$ will produce the claimed initial segment of $\vec{S}$ restricted to trees of height $\aleph_1^M$.

We briefly introduce the almost disjoint coding forcing due to R. Jensen and R. Solovay. We will identify subsets of $\omega$ with their characteristic function and will use the word reals for elements of $2^{\omega}$ and subsets of $\omega$ respectively.
Let $D=\{d_{\alpha} \, : \, \alpha < \aleph_1 \}$ be a family of almost disjoint subsets of $\omega$, i.e. a family such that if $r, s \in D$ then 
$r \cap s$ is finite. Let $X\subset  \omega$  be a set of ordinals. Then there 
is a ccc forcing, the almost disjoint coding $\mathbb{A}_D(X)$ which adds 
a new real $x$ which codes $X$ relative to the family $D$ in the following way
$$\alpha \in X \text{ if and only if } x \cap d_{\alpha} \text{ is finite.}$$
\begin{definition}\label{definitionadcoding}
 The almost disjoint coding $\mathbb{A}_D(X)$ relative to an almost disjoint family $D$ consists of
 conditions $(r, R) \in [\omega]^{<\omega} \times D^{<\omega}$ and
 $(s,S) < (r,R)$ holds if and only if
 \begin{enumerate}
  \item $r \subset s$ and $R \subset S$.
  \item If $\alpha \in X$ and $d_{\alpha} \in R$ then $r \cap d_{\alpha} = s \cap d_{\alpha}$.
 \end{enumerate}
\end{definition}
We shall briefly discuss the $L$-definable, $\aleph_1^L$-sized almost disjoint family of reals $D$  we will use throughout this article. The family $D$ is the canonical almost disjoint family one obtains when recursively adding the $<_L$-least real $x_{\beta}$ not yet chosen and replace it with $d_{\beta} \subset \omega$ where this $d_{\beta}$  is the real which codes the initial segments of $x_{\beta}$ using some recursive bijections between $\omega$ and $\omega^{<\omega}$. The definition of $D$ is uniform over any uncountable, transitive $\ZFP$-models $M$ with, as we can correctly compute $L$ up to $\aleph_1^L$ inside $M$ and then apply the above definition inside $L$'s version of $M$. Even more is true, if $M$ is a countable, transitive model of $\ZFP+``$ $\aleph_1$ exists and $\aleph_1=\aleph_1^L"$, then $M$ will compute $D \upharpoonright \omega_1^M$ in a correct way. The reason is again, that $M$ can define an initial segment of $L$ correctly which suffices to calculate $D \upharpoonright \omega_1^M$.

Last we state a short lemma which will be helpful when showing that our coding forcings work the way they should.
\begin{lemma}\label{a.d.coding preserves Suslin trees}
 Let $T$ be a Suslin tree and let $\mathbb{A}_D(X)$ be the almost disjoint coding which codes
 a subset $X$ of $\omega_1$ into a real with the help of an almost disjoint family
 of reals $D$ of size $\aleph_1$. Then $$\mathbb{A}_{D}(X) \Vdash_{} T \text{ is Suslin }$$
 holds.
\end{lemma}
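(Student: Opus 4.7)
The plan is to reduce the statement to showing that the product $T \times \mathbb{A}_D(X)$ has the ccc in $V$. By the standard fact that $\forceP$ forces $\forceQ$ to be ccc iff $\forceP \times \forceQ$ has the ccc in $V$, together with the observation that $\mathbb{A}_D(X)$ is itself ccc (hence preserves $\omega_1$, so that $T$ remains an $\aleph_1$-tree in the extension), ccc of the product is exactly what is needed for $\mathbb{A}_D(X) \Vdash `` T$ is Suslin$"$.

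To verify the ccc of $T \times \mathbb{A}_D(X)$, I would suppose for contradiction that $\{(t_\xi,(r_\xi,R_\xi)) : \xi<\omega_1\}$ is an uncountable antichain in the product. Since $[\omega]^{<\omega}$ is countable, a pigeonhole argument produces an uncountable $I\subseteq \omega_1$ and a fixed $r\in [\omega]^{<\omega}$ with $r_\xi = r$ for all $\xi\in I$. The decisive observation is that any two $\mathbb{A}_D(X)$-conditions sharing the same first coordinate are compatible: the pair $(r,R_1\cup R_2)$ extends both $(r,R_1)$ and $(r,R_2)$, since clause $(1)$ of Definition \ref{definitionadcoding} is trivial and clause $(2)$ is vacuous whenever the first coordinate is left unchanged.

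Consequently, along $I$ the $\mathbb{A}_D(X)$-coordinates of our antichain are pairwise compatible, so incompatibility of distinct pairs in the product must come entirely from the tree coordinate, i.e.\ $t_\xi\perp_T t_\eta$ for all $\xi\neq\eta$ in $I$. This produces an uncountable antichain $\{t_\xi : \xi\in I\}$ in $T$, contradicting Suslinness of $T$. There is essentially no obstacle here: the whole argument is driven by the countability of the finite working parts of $\mathbb{A}_D(X)$-conditions and by the trivial ``merging by union'' of the promise sides, so all of the substantive content is outsourced to the hypothesis that $T$ is Suslin. The only point worth flagging, and which I would either cite or supply by a routine nice-name argument, is the standard equivalence between $\mathbb{A}_D(X)\Vdash `` T$ ccc$"$ and ccc of the $V$-product $T\times \mathbb{A}_D(X)$.
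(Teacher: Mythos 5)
Your proposal is correct and follows essentially the same route as the paper: the paper simply invokes the Knaster property of $\mathbb{A}_D(X)$ (which your pigeonhole-on-the-finite-part plus union-of-promises argument proves, indeed showing $\sigma$-centredness) to conclude that $\mathbb{A}_D(X)\times T$ is ccc and hence that $T$ stays Suslin. You merely unpack that one-line argument, including the standard equivalence between the product being ccc and $\mathbb{A}_D(X)$ forcing $T$ to be ccc, which the paper leaves implicit.
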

\begin{proof}
 This is clear as $\mathbb{A}_{D}(X)$ has the Knaster property, thus the product $\mathbb{A}_{D}(X) \times T$ is ccc and $T$ must be Suslin in $V[{\mathbb{A}_{D}(X)}]$. 
\end{proof}

\section{Coding forcings}
We shall define the desired universe now. First we start with $L$ as our ground model.

In a first step we fix our regular cardinal $\kappa$ and generically add $\kappa$-many
$\omega_1$-Cohen subsets of $\omega_1$ with a countably supported product. We write $\mathbb{C} (\omega_1)$ for $\omega_1$-Cohen forcing, that is conditions are countable functions from some $\alpha < \omega_1$ to $2$ ordered by extension.

\[ \forceP^0= \prod_{\alpha < \kappa} \mathbb{C} (\omega_1). \]
Note that $\forceP^0$ is $\sigma$-closed and has the $\aleph_2$-c.c., thus cardinals are preserved.

In a second step we destroy every element of our $\omega_1$-sequence of independent sequence $\vec{S}$ of $L$-Suslin trees via generically adding a cofinal branch using a finitely supported product. 
So,  we let
\[ \forceP^1:= \prod_{\alpha <\omega_1} S_{\alpha}. \]
By independence of $\vec{S}$, $\forceP^1$ has the c.c.c and size $\aleph_1$, so $L^{\forceP^0\ast \forceP^1}$ and $L$ have the same cardinals and cofinalities.

We shall define the coding forcing now we will use throughout this article.
Our ground model is now $L^{\forceP^0 \ast \forceP^1}$ and fix a generic filter $g^0 \ast g^1$.  Let $g$ be an $\omega_1$-Cohen set added by $g^0$, i.e. assume that \[g^0= (g^0_{\alpha} \mid \alpha < \kappa )\] and there is an $\alpha < \kappa$ such that

\[g= g^0_{\alpha}. \]

Let $x$ be a real number within the model $L[g^0 \ast g^1]$. We are going to introduce a forcing notion, referred to as the ``coding forcing$"$, which utilizes a generically added subset of $\omega_1$, stemming from the generic filter $g^0$. The set $g$ can be used to identify the starting indices of $\omega$-blocks of the sequence of trees $\vec{S}$; within these blocks, we will encode $\omega_1$-branches through $\vec{S}$ in a manner that reflects the real $x$.

To formally specify the coding forcing $\forceP(g,x)$, we must first define several associated sets. Working within the constructible universe $L$, let's fix a bijection $\rho$ mapping countable subsets of $\omega_1$ (i.e., $[\omega_1]^{\omega}$) to ordinals below $\omega_1$. We then define $h^g := \{ \rho (g \cap \alpha) \mid \alpha < \omega_1 \}$. The function $\rho$ serves to convert the family of generically introduced $\omega_1$-Cohen subsets (potentially $\kappa$ many) into an almost disjoint family on $\omega_1$ of size $\kappa$. This means any two sets in the resulting family have only a countable intersection.

Let's enumerate $h^g$ as $(\alpha_i \mid i < \omega_1)$. For each $\alpha_i \in h^g$, we define a set of branches $B_{\alpha_i,x}$ corresponding to the pattern dictated by $x$ within the $\omega$-block of $\vec{S}$ starting at index $\omega \alpha_i$. Specifically:
\[ B_{\alpha_i,x} = \{ b_{\omega \alpha_i + 2n} \mid n \notin x \} \cup \{ b_{ \omega \alpha_i + 2n+ 1} \mid n \in x \} \]

Now, let $X$ be the union of all these branch sets: $X := \bigcup_{\alpha_i \in h^g} B_{\alpha_i,x}$. This set $X$ consists of $\omega_1$-many $\omega_1$-branches that pass through elements of the sequence $(S_{\omega \cdot \alpha_i + n} \mid n \in \omega, \alpha_i \in h^g)$. The structure of $X$ encodes the characteristic function of $x$ via the following $\Sigma_1(\{\omega_1,X\})$ formula:

\begin{align*}
\varphi(x) \equiv & \exists N (N \text{ is transitive}, |N| = \aleph_1, N \models \ZFP, X \in N \land \\ & N \models `` X \text{ defines a subset } h \subset \omega_1 \text{ such that } \\ & (\forall \beta \in h)(\forall n \in \omega) ( (n \in x \Rightarrow S_{\omega \beta + 2n + 1} \text{ possesses an } \omega_1\text{-branch}) \land \\ & \qquad \qquad \qquad \quad (n \notin x \Rightarrow S_{\omega \beta + 2n} \text{ possesses an } \omega_1\text{-branch}) ) )"
\end{align*}

It's worth noting that the implications in the formula could be strengthened to equivalences ($n \in x \Leftrightarrow S_{\omega \beta + 2n + 1}$ has an $\omega_1$-branch, and similarly for $n \notin x$), but this stronger form is not necessary for our purposes. Also, whenever we state "$S_{\delta}$ has an $\omega_1$-branch," we implicitly mean this is defined using the $\Sigma_1(\{\omega_1\})$-formula $\phi$ referenced in the proof of Lemma \ref{DefinitionIndependentSequence}, which allows defining the trees from $\vec{S}$ within the model $N$.

Our objective is to reformulate the set $X \subset \omega_1$ such that a localized version of the property $\varphi(x)$ holds even for appropriate countable transitive models. This adjustment is crucial because we seek a projective predicate, whereas $\varphi(x)$ as stated only functions reliably within $H(\omega_2)$.

To achieve this, we first select an ordinal $\beta$ of size $\aleph_1$ ensuring $X \in L_{\beta}[X]$ and $L_{\beta}[X] \models \ZFP$. Consequently, it must be that $L_{\beta}[X] \models \varphi(x)$. Next, we choose the $<_{L[X]}$-minimal club set $C \subset \omega_1$ belonging to $L[X]$, along with the $<_{L[X]}$-minimal sequence $(M_{\alpha} \mid \alpha \in C)$ of countable elementary submodels satisfying:
\[ \forall \alpha \in C (M_{\alpha} \prec L_{\beta}[X] \land M_{\alpha} \cap \omega_1 = \alpha) \]

Now, we introduce a set $Y \subset \omega_1$, with $Y \in L[X]$, designed to encode the pair $(C, X)$. The encoding scheme is as follows: The elements of $Y$ with odd indices will code the set $X$. Let $E(Y)$ represent the set of elements in $Y$ with even indices. If $\{c_{\alpha} \mid \alpha < \omega_1\}$ is the enumeration of the club $C$, then $E(Y)$ must satisfy these conditions:
\begin{enumerate}
    \item $E(Y) \cap \omega$ encodes a well-ordering of order type $c_0$.
    \item The intersection $E(Y) \cap [\omega, c_0)$ must be empty.
    \item For every $\beta$, the set $E(Y) \cap [c_{\beta}, c_{\beta} + \omega)$ encodes a well-ordering of order type $c_{\beta+1}$.
    \item For every $\beta$, the intersection $E(Y) \cap [c_{\beta}+\omega, c_{\beta+1})$ must be empty.
\end{enumerate}
The advantage of constructing this reshaped set $Y \in L[X]$ is captured by the following result. It demonstrates that certain countable transitive models of $\ZFP$, under mild extra conditions, are sufficient to recognize the branches encoding the characteristic function related to $x$.

\begin{lemma}
Consider an $\omega_1$-preserving outer model $\tilde{W}$ of $L[g^0][g^1]$. Let $X, C, Y \subset \omega_1$, $\gamma < \omega_1$, and $x \in L \cap \omega^{\omega}$ be as previously defined. For any countable transitive model $N \in \tilde{W}$ satisfying $\ZFP + ``\aleph_1 \text{ exists}"$, such that $\omega_1^N = (\omega_1^L)^N$ and $Y \cap \omega_1^N \in N$, the following holds:
\begin{align*}
 x \in N \land N \models \varphi(x)
\end{align*}
\end{lemma}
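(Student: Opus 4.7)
The plan is to argue that $\bar Y := Y \cap \omega_1^N$, together with the absoluteness of $\vec S$ granted by Theorem \ref{DefinitionIndependentSequence}, gives $N$ enough information to produce $x$ and an internal witness to the existential $\varphi(x)$. First I would recover $\bar X := X \cap \omega_1^N$ and $C \cap \omega_1^N$ inside $N$ by reading off the reshaping. The even part $E(\bar Y)$ is decoded by transfinite recursion along $\omega_1^N$: having already obtained $c_\beta$, the block $E(\bar Y) \cap [c_\beta, c_\beta + \omega)$ is interpreted as a well-ordering on $\omega$ whose order type, necessarily below $\omega_1^N$, is $c_{\beta+1}$, and the gap up to $c_{\beta+1}$ is empty. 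Since every such order type is recognized as an ordinal of $N$ via a canonical bijection $\omega \to c_{\beta+1}$, $N$ produces $C \cap \omega_1^N$, and then $\bar X$ from the odd part by direct decoding.

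Second I would import the Suslin sequence into $N$. The hypothesis $\omega_1^N = (\omega_1^L)^N$ together with the remark following Theorem \ref{DefinitionIndependentSequence} ensures that $N$ correctly computes the initial segment $(S_\delta \upharpoonright \omega_1^N \mid \delta < \omega_1^N)$ by applying the $\Sigma_1(\{\omega_1^N\})$ formula $\phi$ inside $N$'s version of $L$. In particular, for every $\delta < \omega_1^N$, the intersection $\bar X \cap S_\delta$ is the same set in $N$ as in $V$, and any cofinal branch of $S_\delta$ contained in $\bar X$ is recognized as such by $N$.

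Third I would extract $x$ and assemble the witness to $\varphi(x)$. In $N$, define $h$ to be the set of $\alpha < \omega_1^N$ such that every tree $S_{\omega \alpha + n}$ for $n < \omega$ carries a cofinal branch inside $\bar X$; this is definable in $N$ from $\bar X$ and the absolute copy of $\vec S$, and agrees with $h^g \cap \omega_1^N$ by the construction of $X$. Picking any $\alpha \in h$, the set $\{ n \in \omega \mid S_{\omega \alpha + 2n + 1} \text{ carries a branch in } \bar X \}$ equals $x$, placing $x$ in $N$. To witness $\varphi(x)$ inside $N$, choose $\bar\beta < \aleph_2^N$ large enough that $L_{\bar\beta}[\bar X] \models \ZFP$ and $\bar\beta > \omega_1^N$, and set $N' := L_{\bar\beta}[\bar X]$. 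Then $N'$ is transitive in $N$, of size $\aleph_1^N$ there, models $\ZFP$, contains $\bar X$, and reconstructs $\vec S \upharpoonright \omega_1^{N'}$ by the same absoluteness argument. For every $\alpha \in h$ and $n \in \omega$ the tree $S_{\omega \alpha + 2n + 1}$ (when $n \in x$), respectively $S_{\omega \alpha + 2n}$ (when $n \notin x$), carries the required $\omega_1^{N'}$-branch inside $N'$ because these are exactly the branches encoded in $\bar X$. This delivers the $\Sigma_1$ matrix of $\varphi$, and hence $N \models \varphi(x)$.

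The main obstacle I anticipate lies in the first step: one needs to check that the reshaping recursion terminates coherently within the fragment $\ZFP$ available to $N$ and that $N$ cannot mistake an ill-founded coded relation for a well-ordering, which would yield a spurious value of $c_{\beta+1}$. Standard reshaping arguments handle this, but the details require a careful induction along $\omega_1^N$ using $\omega_1^N = (\omega_1^L)^N$; once the decoding goes through, the remaining steps are absoluteness plus the uniform definability of $\vec S$ from Theorem \ref{DefinitionIndependentSequence}.
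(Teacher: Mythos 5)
There is a genuine gap, and it sits exactly where your proposal waves its hands: the claim that the set $h$ you define inside $N$ from $\bar X = X \cap \omega_1^N$ ``agrees with $h^g \cap \omega_1^N$ by the construction of $X$'', and more generally that the restricted set $\bar X$ still correctly presents, level by level, the blocks and the branch pattern coding $x$. This is not automatic for an arbitrary countable ordinal in place of $\omega_1^N$: the enumeration of $h^g$, the coding of the $\omega_1$-many branches into the single set $X$, and the node-coding of the trees all have to cohere at $\omega_1^N$ (e.g.\ every node of a coded branch lying on a level below $\omega_1^N$ must itself be decodable from $X \cap \omega_1^N$, and no block with index below $\omega_1^N$ may have its branch data sitting above $\omega_1^N$). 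Such coherence holds only on a club, and the whole point of the auxiliary objects $C$ and $(M_\alpha \mid \alpha \in C)$ is to certify it: the paper first uses the even (reshaped) part of $Y \cap \omega_1^N$ to show $\omega_1^N \in C$, and then, instead of any direct verification, observes that the transitive collapse of $M_{\omega_1^N} \prec L_\beta[X]$ (which has $M_{\omega_1^N} \cap \omega_1 = \omega_1^N$) is a $\ZFP$-model of the form $L_{\bar\zeta}[X \cap \omega_1^N]$ which, by elementarity, satisfies the localized $\varphi(x)$ and which $N$ can rebuild from the odd part of $Y \cap \omega_1^N$. You decode $C \cap \omega_1^N$ in your first step but then never use it, and you never invoke the elementary chain at all; without $\omega_1^N \in C$ and the collapse argument, your choice of an arbitrary $\bar\beta < \aleph_2^N$ with $L_{\bar\beta}[\bar X] \models \ZFP$ gives no reason why that model should see the required cofinal branches in the required pattern. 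Your closing paragraph locates the difficulty in the well-foundedness of the reshaping decoding, but that part is comparatively routine; the real crux is the one above.

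Two smaller points: your definition of $h$ as the set of $\alpha$ such that \emph{every} tree $S_{\omega\alpha+n}$, $n<\omega$, carries a branch inside $\bar X$ is literally wrong (by construction exactly one tree of each pair $S_{\omega\alpha+2n}, S_{\omega\alpha+2n+1}$ receives a branch, so your $h$ would be empty); and the membership $x \in N$ should be obtained, as in the paper's route, from the collapsed model $L_{\bar\zeta}[X \cap \omega_1^N]$ available inside $N$, rather than from the unjustified direct reading of the pattern.
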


\begin{proof}
Let $N \in \tilde{W}$ be a countable transitive model satisfying the hypotheses $\omega_1^N = (\omega_1^L)^N$ and $Y \cap \omega_1^N \in N$.
First, we argue that $\omega_1^N$ must be an element of $C$. If not, $\omega_1^N$ would lie strictly between some $c_{\gamma}$ and $c_{\gamma+1}$. However, condition (3) in the definition of $Y$ implies that $N$ can discern that $c_{\gamma+1}$ is countable (since $E(Y) \cap [c_\gamma, c_\gamma + \omega)$ encodes a well-order of type $c_{\gamma+1}$, and this intersection is in $N$). This contradicts the fact that $\omega_1^N < c_{\gamma+1}$.

Since $\omega_1^N \in C$, consider the elementary submodel $M_{\omega_1^N} \prec L_{\beta}[X]$ from the sequence defined earlier. Let $\bar{M}$ be its transitive collapse.
Because $\omega_1^N \in C$, we know that $\bar{M}$ and $N$ must have the same $\omega_1$, i.e., $\omega_1^N = \omega_1^{\bar{M}}$. Furthermore, by elementarity ($M_{\omega_1^N} \prec L_{\beta}[X]$) and the fact that $L_{\beta}[X]$ satisfies "The minimal $\ZFP$-model $L_{\zeta}[X]$ verifies $\varphi(x)$", it follows that:
\[ M_{\omega_1^N} \models \text{``The least } \ZFP\text{-model } L_{\zeta}[X] \text{ witnesses that } \varphi(x) \text{ is true}." \]
After collapsing, this becomes:
\[ \bar{M} \models \text{``The least } \ZFP\text{-model } L_{\bar{\zeta}}[ X \cap \omega_1^{\bar{M}}] \text{ witnesses } \varphi(x)." \]
Since $N$ contains $Y \cap \omega_1^N$, it also contains $X \cap \omega_1^N$ (coded in the odd part). $N$ can therefore construct the model $L_{\bar{\zeta}}[X \cap \omega_1^N]$. Thus,
\[ N \models \text{``} L_{\bar{\zeta}}[X \cap \omega_1^{N}] \text{ witnesses that } \varphi(x) \text{ holds true."} \]
This implies $N \models \varphi(x)$, completing the proof.
\end{proof}

We now use the constructed set $Y \subset \omega_1, Y \in L[X]$ to finally define the forcing $\forceP(g,x)$. This forcing is specified as the almost disjoint coding forcing $\mathbb{A}_D(Y)$, performed relative to our fixed almost disjoint family of reals $D = \{d_{\alpha} \mid \alpha < \omega_1\}$ residing in $L$, coding $Y$ into a single real $r$.
Importantly, the definition of $\mathbb{A}_D(Y)$ depends solely on the subset $Y \subset \omega_1$ (which, in turn, depends only on $g$ and $x$) and the family $D$. It is independent of the ambient universe in which it's defined, provided that universe has the correct $\omega_1$ and contains the set $Y$. We also know from previous results that $\mathbb{A}_D(Y)$ preserves Suslin trees.

Let $G$ be a $\mathbb{A}_D(Y)$-generic filter over the model $L[g^0 \ast g^1]$. Let $r_Y$ be the generic real introduced by $G$. This real $r_Y$ encodes the set $Y \subset \omega_1$ through the following relationship:
\begin{align*}
\forall \alpha < \omega_1 (\alpha \in Y \Leftrightarrow r_Y \cap d_{\alpha} \text{ is finite})
\end{align*}
Due to the absolute nature of the definition of $D \in L$, this equivalence holds in all $\omega_1$-preserving outer models as well (in fact, it holds in all outer universes, although $Y$ might become countable in some, a detail we won't need).
The real $r_Y$ encapsulates sufficient information such that arbitrary countable $\ZFP$-models containing $r_Y$ and meeting a mild technical condition can verify that $\varphi(x)$ is true.

\begin{lemma}
Let $\tilde{W}$ be an outer universe of $L[g^0 \ast g^1][G]$, and let $x$ be the real fixed earlier. Within $\tilde{W}$, the real $r_Y$ possesses the following $\Pi^1_2(x)$-property:

\begin{align*}
({\ast}{\ast})_{r_Y} (x): \equiv & \text{ For every countable, transitive model } N \text{ of } \ZFP + ``\aleph_1 \text{ exists}" \\ & \text{ such that } \omega_1^N = (\omega_1^L)^N \text{ and } r_Y \in N, \text{ it holds that } \\ & N \models \varphi(x)
\end{align*}
\end{lemma}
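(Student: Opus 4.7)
The plan is to reduce the statement to the preceding lemma by showing that any countable transitive $N$ satisfying the hypotheses can internally reconstruct $Y \cap \omega_1^N$ from $r_Y$, and then to observe that the resulting property is $\Pi^1_2(x)$ because the quantification is over countable transitive models coded by reals.

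First I would fix an arbitrary countable transitive $N \in \tilde{W}$ with $N \models \ZFP + ``\aleph_1 \text{ exists}"$, $\omega_1^N = (\omega_1^L)^N$, and $r_Y \in N$. The key observation is that the almost disjoint family $D = \{d_{\alpha} \mid \alpha < \omega_1\}$ is defined uniformly over any transitive $\ZFP$-model with the correct $\omega_1^L$, as was remarked right after the definition of $D$ in Section~2. Therefore $N$ correctly computes $D \upharpoonright \omega_1^N = \{d_{\alpha} \mid \alpha < \omega_1^N\}$. Since the defining equivalence
\[
\alpha \in Y \iff r_Y \cap d_{\alpha} \text{ is finite}
\]
of the almost disjoint coding is absolute between transitive models containing $r_Y$ and the relevant $d_{\alpha}$'s, $N$ decodes the set $Y \cap \omega_1^N$ from the real $r_Y$. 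In particular, $Y \cap \omega_1^N \in N$.

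Next I would simply invoke the previous lemma. Its hypotheses are exactly $\omega_1^N = (\omega_1^L)^N$ and $Y \cap \omega_1^N \in N$, and we have just verified both. The conclusion is that $x \in N$ and $N \models \varphi(x)$, which is the required statement. This completes the verification of $({\ast}{\ast})_{r_Y}(x)$.

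Finally I would justify the complexity claim. The quantifier ``for every countable transitive $N$'' is rendered in the standard way by quantifying over reals $e \in \omega^{\omega}$ coding well-founded extensional binary relations on $\omega$, which is a $\Pi^1_1$ matter; the further requirements that $N \models \ZFP + ``\aleph_1 \text{ exists}"$, that $\omega_1^N = (\omega_1^L)^N$ (a first-order statement about the transitive collapse using the canonical $\Sigma_1$-definition of $L$), and that $r_Y \in N$ are all arithmetic in $e$, $r_Y$. The inner assertion $N \models \varphi(x)$ is then a bounded statement about the collapse of $N$ and is arithmetic in $e$, $r_Y$, $x$. So the whole statement has the form ``$\forall e\,(\text{arithmetic in } e, r_Y, x)$'', which is $\Pi^1_2$ in the parameters $r_Y$ and $x$, as claimed. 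No genuine obstacle arises here; the only subtle point is the absolute computability of $D$ in $N$, which has already been established in Section~2.
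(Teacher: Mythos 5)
Your core verification---that any such $N$ computes $D \upharpoonright \omega_1^N$ correctly, decodes $Y \cap \omega_1^N$ from $r_Y$ by absoluteness of the almost disjoint coding, and is then in the situation of the preceding lemma---is exactly the paper's verification, and your complexity analysis of $(\ast\ast)_{r_Y}(x)$ as $\Pi^1_2(r_Y,x)$ is also correct. The gap is in the scope of the appeal to the preceding lemma. That lemma is stated (and proved) only for $\omega_1$-\emph{preserving} outer models $\tilde{W}$ of $L[g^0][g^1]$, whereas the present lemma allows $\tilde{W}$ to be an arbitrary outer universe of $L[g^0 \ast g^1][G]$. If $\tilde{W}$ collapses $\omega_1^L$, then $\tilde{W}$ contains countable transitive models $N$ satisfying all your hypotheses for which the earlier proof breaks down: that proof needs $\omega_1^N$ to be a point of the club $C \subset \omega_1^L$ (so in particular $\omega_1^N < \omega_1^L$) in order to locate the elementary submodel $M_{\omega_1^N} \prec L_{\beta}[X]$, while in a collapsing $\tilde{W}$ one can have, e.g., $\omega_1^N = \omega_1^L$, and also countable models that simply do not exist in the coding extension. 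So quoting the previous lemma for an arbitrary $\tilde{W}$ is not licensed, and your argument does not cover these $N$.

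The paper closes exactly this hole with the step you omit: it first proves $(\ast\ast)_{r_Y}(x)$ \emph{inside} $L[g^0 \ast g^1][G]$ (where the preceding lemma does apply, since the coding forcing is ccc and hence this model is an $\omega_1$-preserving outer model of $L[g^0][g^1]$), and then uses that $(\ast\ast)_{r_Y}(x)$ is $\Pi^1_2(r_Y,x)$ together with Shoenfield absoluteness to transfer its truth to every outer universe $\tilde{W}$, including those with new reals or a collapsed $\omega_1$. Your complexity computation is precisely what makes this transfer available, but it has to be \emph{used}: either add the Shoenfield step, or restrict your direct argument to $\omega_1$-preserving $\tilde{W}$ and handle the general case separately.
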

\begin{proof}
Let's first assume $\tilde{W} = L[g^0 \ast g^1][G]$. The statement $(\ast \ast)_{r_Y}(x)$ is a $\Pi^1_2(r_Y)$-statement. If we can demonstrate its truth in $L[g^0 \ast g^1][G]$, Shoenfield absoluteness guarantees it will remain true in all outer models $\tilde{W}$.

Given a model $N$ as described in the hypothesis, since $\omega_1^N = (\omega_1^L)^N$ and the decoding process is absolute, $N$ can decode the set $Y \cap \omega_1^N$ from $r_Y$ using its own version of $D$ (which is simply $D \cap \omega_1^N$). Let $Z$ be the set coded by the odd entries of $Y \cap \omega_1^N$. Again, by the absoluteness of the decoding, $Z$ must be equal to $X \cap \omega_1^N$, where $X$ is the set from the previous lemma. Therefore, $N$ can determine $X \cap \omega_1^N$. As shown in the proof of the previous lemma, this allows $N$ to conclude:
\begin{align*}
 N \models \text{``The least } \ZFP \text{ model } L_{\zeta}[Z] \text{ witnesses that } \varphi(x) \text{ holds true."}
\end{align*}
Consequently, $N \models \varphi(x)$, as the lemma claims.
\end{proof}

To summarize: for a given real $x$ and a factor $g$ of $g^1$, the forcing $\forceP(g,x)$ is a proper forcing whose factors have size $\aleph_1$. It generically adds a real $r_Y$ which ensures that the $\Pi^1_2$-property $(\ast \ast)_{r_Y}(x)$ holds true.
More broadly, if $\tilde{W} \supseteq L[g^0 \ast g^1]$ is an outer universe, and there exists a real $r \in \tilde{W}$ that satisfies $(\ast \ast)_r(z)$ for some real $z \in \tilde{W}$, we say that $r$ witnesses that the real $z$ \emph{is written into $\vec{S}$}, or that $r$ witnesses $z$ \emph{is coded into $\vec{S}$}. If such a real $r'$ exists for $z$ in $\tilde{W}$, we simply say that $\tilde{W}$ thinks $z$ is coded into $\vec{S}$ or $\tilde{W}$ thinks $z$ is written into $\vec{S}$.

The statement ``$x$ is coded into $\vec{S}"$ corresponds to a $\Sigma^1_3(x)$-formula, which we denote by $\Phi^3(x)$. It can be expressed as follows, having the logical form $\exists r \forall M (\Delta^1_2(r,M,x) \rightarrow \Delta^1_2(r,M,x))$:
\begin{align*}
\Phi^3 (x) \equiv \exists r \forall M ( & ( M \text{ is countable and transitive, } M \models \ZFP + ``\aleph_1 \text{ exists}", \\ & \omega_1^M = (\omega_1^L)^M, \text{ and } r, x \in M ) \implies M \models \varphi(x) )
\end{align*}

The preceding lemma also has a converse. This means the projective and local property $(\ast \ast)_r(x)$ has implications for how certain inner models within the ambient universe perceive branches through $\vec{S}$.

\begin{lemma}
Let $\tilde{W} \supseteq L$ be an $\omega_1$-preserving outer model satisfying $\tilde{W} \models \ZFC$. Suppose $x, r$ are reals in $\tilde{W}$ such that $(\ast \ast)_r(x)$ holds true. Then any uncountable, transitive model $M \in \tilde{W}$ that contains $\{\omega_1, r\}$, satisfies $M \models \omega_1^M = \omega_1$, and $M \models \ZFP$, will also satisfy $M \models \varphi(x)$.
\end{lemma}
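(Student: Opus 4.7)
The plan is to reduce the uncountable case to the countable case already covered by $(\ast\ast)_r(x)$, by performing a Löwenheim--Skolem collapse of $M$ and then propagating the $\Sigma_1$-statement $\varphi(x)$ back up via elementarity.

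The first thing I would verify is that $M$ sees its own $\aleph_1$ as $\aleph_1^L$. Since $\tilde W$ is $\omega_1$-preserving over $L$ and $\omega_1^M = \omega_1$ is the true $\omega_1$ of $\tilde W$, we have $\omega_1^M = \omega_1^L$. Because $\omega_1^M \in M$ and $M \models \ZFP$, the level $L_{\omega_1^M}$ is an element of $M$, and transitivity of $M$ forces $L_{\omega_1^M} \subseteq M$. Every ordinal below $\omega_1$ is $L$-countable, with a witnessing bijection already in $L_{\omega_1}$ and therefore in $M$. So $M$ believes every $\alpha < \omega_1^M$ to be $L$-countable, which forces $(\omega_1^L)^M = \omega_1^M$.

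Next I would take a countable elementary submodel $N_0 \prec M$ with $\{r, x, \omega_1^M\} \subseteq N_0$, and let $\pi \colon N_0 \to \bar N$ be its Mostowski collapse. Because $r$ and $x$ are hereditarily countable reals, they are fixed by $\pi$, and hence lie in $\bar N$. The model $\bar N$ is countable, transitive, and satisfies $\ZFP + \text{``}\aleph_1 \text{ exists''}$. The sentence ``$\omega_1 = \omega_1^L$'' holds in $M$ by the previous paragraph, passes to $N_0$ by elementarity, and then to $\bar N$ via $\pi$; hence $\omega_1^{\bar N} = (\omega_1^L)^{\bar N}$. Every hypothesis of $(\ast\ast)_r(x)$ is now met by $\bar N$, so the assumption $(\ast\ast)_r(x)$ yields $\bar N \models \varphi(x)$.

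Finally I would lift this back up. The formula $\varphi(x)$ is first-order (indeed $\Sigma_1$) in the parameters $x$ and $\omega_1$. Since $\pi^{-1}$ is an isomorphism, $N_0 \models \varphi(x)$, and elementarity $N_0 \prec M$ then gives $M \models \varphi(x)$, which is the desired conclusion. The main obstacle I anticipate is the first step, i.e.\ establishing $(\omega_1^L)^M = \omega_1^M$: without it the collapsed model $\bar N$ need not satisfy the hypothesis $\omega_1^{\bar N} = (\omega_1^L)^{\bar N}$ required by $(\ast\ast)_r(x)$, and the whole reduction breaks down. This step relies essentially on both the $\omega_1$-preservation of $\tilde W$ over $L$ and on the fact that $\omega_1^M \in M$ together with $M \models \ZFP$ drag all of $L_{\omega_1^M}$ into $M$ by transitivity.
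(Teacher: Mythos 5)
Your proposal is correct and follows essentially the same route as the paper: a countable elementary submodel of $M$ containing $r$, its Mostowski collapse, an application of $(\ast\ast)_r(x)$ to the collapsed model, and transfer of $\varphi(x)$ between $M$ and the collapse via elementarity (you argue directly where the paper argues by contradiction, which is an immaterial difference). Your explicit verification that $\omega_1^M = (\omega_1^L)^M$, using $\omega_1$-preservation over $L$ and $L_{\omega_1^M} \subseteq M$, is a detail the paper leaves implicit and is a welcome addition.
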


\begin{proof}
Suppose, for contradiction, that such an uncountable, transitive model $M$ exists but fails to satisfy $\varphi(x)$. By the Löwenheim-Skolem theorem, there exists a countable elementary submodel $N \prec M$ with $r \in N$. Let $\bar{N}$ be the transitive collapse of $N$. Then $\bar{N}$ would be a countable, transitive model satisfying the conditions for $(\ast \ast)_r(x)$ (since $M$ did, and these properties are elementary or preserved by collapse), but $\bar{N}$ would fail to satisfy $\varphi(x)$ (as $M$ didn't, and $\varphi(x)$ is $\Sigma_1$, its truth persists downwards via elementarity). This contradicts the assumption that $(\ast \ast)_r(x)$ holds for all such countable models.
\end{proof}

\begin{corollary}\label{codesdeterminerealworld}
Assume $\tilde{W}$ is an outer universe of $L$ sharing the same $\omega_1$. Suppose further that $r \in \tilde{W}$ is a real such that $\tilde{W} \models (\ast \ast)_r(x)$ for some real $x \in \tilde{W}$. Let $h \subset \omega_1$ be the set derived from the constructible sequence whose existence is guaranteed by $\varphi(x)$, representing the indices of $\omega$-blocks in $\vec{S}$ where the pattern for $x$ is encoded. If $\gamma \in h$, then within $\tilde{W}$, the following hold:
\begin{align*}
n \in x \implies L[r] \models ``S_{\omega \gamma + 2n+1} \text{ has an } \omega_1\text{-branch}."
\end{align*}
and
\begin{align*}
n \notin x \implies L[r] \models ``S_{\omega \gamma + 2n} \text{ has an } \omega_1\text{-branch}."
\end{align*}
\end{corollary}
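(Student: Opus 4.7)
The plan is to reduce the corollary to the preceding lemma by applying it to a suitable uncountable transitive model of the form $L_\theta[r]$ sitting inside $\tilde{W}$, and then unpacking the $\Sigma_1$-statement $\varphi(x)$ to extract the required branches. I first observe that the squeeze $\omega_1^L \leq \omega_1^{L[r]} \leq \omega_1^{\tilde{W}} = \omega_1^L$ (from $L \subseteq L[r] \subseteq \tilde{W}$ together with the hypothesis that $\tilde{W}$ preserves $\omega_1$ over $L$) forces $\omega_1^{L[r]} = \omega_1^{\tilde{W}}$. Using this, I choose $\theta$ below $\omega_2^{L[r]}$ large enough that $M := L_\theta[r]$ models $\ZFP + \text{``}\aleph_1 \text{ exists''}$ and contains $\omega_1^{\tilde{W}}$ as its own $\omega_1$; such $\theta$ exists by routine reflection inside $L[r]$.

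With this choice, $M$ is an uncountable transitive element of $\tilde{W}$ containing $\{\omega_1,r\}$ with $\omega_1^M = \omega_1$ and $M \models \ZFP$, so the preceding lemma yields $M \models \varphi(x)$. Unpacking this statement inside $M$ produces a transitive $N \in M$ of size $\aleph_1^M$ with $N \models \ZFP$, together with a subset $X \in N$ encoding some $h \subset \omega_1$, such that for each $\beta \in h$ and $n \in \omega$: $n \in x$ forces $N \models \text{``}S_{\omega\beta + 2n+1}$ has an $\omega_1$-branch$\text{''}$, and $n \notin x$ forces the analogous statement for $S_{\omega\beta + 2n}$. Here the trees are identified via the $\Sigma_1(\{\omega_1\})$-definition $\phi$ from Theorem \ref{DefinitionIndependentSequence}, whose correctness was noted to transfer to any uncountable transitive $\ZFP$-model with the right $\omega_1$, so the $S_\delta$ computed inside $N$ are the genuine trees from $\vec{S}$.

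The final step is purely absoluteness: the existence of a cofinal $\omega_1$-branch through a tree of height $\omega_1$ is witnessed by an actual object, so if $N$ sees such a branch through some $S_\delta$, that branch is a member of $N \subseteq M \subseteq L[r]$. Hence $L[r]$ also verifies the relevant branch existence, which is exactly the conclusion claimed. The only real obstacle is bookkeeping the cardinals so that the previous lemma applies, and this is handled by the initial $\omega_1$-preservation observation; everything else is formal unpacking and upward absoluteness of $\Sigma_1$-statements.
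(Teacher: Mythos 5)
Your argument is correct and follows essentially the same route as the paper: verify that $L[r]$ (the paper applies the preceding lemma to the class model $L[r]$ directly, you to a set-sized approximation $L_\theta[r]$, which is a harmless and arguably more careful variant) satisfies the hypotheses of the preceding lemma, conclude $\varphi(x)$ holds there, and then transfer the branch statements via upward absoluteness of the $\Sigma_1(\{\omega_1\})$-formulas defining $\vec{S}$ and asserting the existence of $\omega_1$-branches. No substantive difference or gap.
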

\begin{proof}
First, by the previous lemma, since $L[r]$ is an inner model of $\tilde{W}$ containing $r$ and $\omega_1$, and satisfies $\ZFP$ and $\omega_1^{L[r]} = \omega_1$, we have $L[r] \models \varphi(x)$. The definition of the sequence $\vec{S}$ is absolute (using the formula $\phi$ from Lemma \ref{DefinitionIndependentSequence}'s proof) between $L[r]$ and $\tilde{W}$. The statement ``$S_{\beta}$ has an $\omega_1$-branch$"$ is $\Sigma_1(\{\omega_1\})$ and thus upwards absolute from $L[r]$ to $\tilde{W}$. Since $L[r] \models \varphi(x)$, it asserts the existence of these branches based on $x$. The corollary follows directly.
\end{proof}

The definition of the forcing $\forceP(g,x)$ exhibits a significant degree of absoluteness. Specifically, its definition is entirely independent of the universe in which it is computed, provided that universe contains the necessary set $Y \subset \omega_1$ (which encodes the relevant branches tied to $x$, the club $C$, etc., as defined above). We will leverage this property shortly.

Our strategy involves iteratively applying these coding forcings. The intention is to encode progressively more reals into the structure $\vec{S}$, thereby populating our target $\Sigma^1_3$-set, which comprises all reals coded into $\vec{S}$. It is essential that this iterative process does not inadvertently code reals that were not explicitly intended. The following result confirms this is the case.

Recall that $\Phi^3(x)$ denotes the $\Sigma^1_3$-statement "$x$ is coded into $\vec{S}$". Consider a finite support iteration $(\forceP_{\beta}, \dot{\forceQ}_{\beta} \mid \beta < \kappa)$ where each $\dot{\forceQ}_{\beta}$ is forced by $\forceP_{\beta}$ to be a coding forcing $\operatorname{Code}(\dot{x}_{\beta})$ for some $\forceP_{\beta}$-name for a real $\dot{x}_{\beta}$. Let $G$ be $\forceP_{\kappa}$-generic over $L[g^0][g^1]$. Within the generic extension $L[g^0][g^{1}][G]$, we define the set of reals \emph{intentionally coded} by the iteration $\forceP_{\kappa}^G$ as $\{\dot{x}_{\beta}^G \mid \beta < \kappa\}$.

\begin{lemma}\label{nounwantedcodes}
Let $\forceP \in L[g^0 \ast g^1]$ be a finite support iteration of coding forcings $\operatorname{Code}(x_\beta)$ of length $\kappa$. Let $G \subset \forceP$ be generic over $L[g^0 \ast g^1]$, and let $\{ x_{\beta} \mid \beta < \kappa\}$ be the set of reals intentionally coded by $\forceP^G$. Let $\Phi^3(v_0)$ be the formula defined earlier. Then, within the model $L[g^0][g^1][G]$, the set of reals $x$ satisfying $\Phi^3(x)$ is precisely $\{ x_{\beta} \mid \beta < \kappa\}$.
\end{lemma}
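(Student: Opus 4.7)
My plan breaks into two directions.

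For the inclusion $\{x_\beta : \beta < \kappa\} \subseteq \{x : \Phi^3(x)\}$: at stage $\beta$ the iteration performs $\operatorname{Code}(x_\beta)$, which adds a real $r_\beta$ for which $(\ast\ast)_{r_\beta}(x_\beta)$ holds in $L[g^0][g^1][G_{\beta+1}]$, by the lemma preceding the statement. Since $(\ast\ast)_r(x)$ is $\Pi^1_2(r,x)$, Shoenfield absoluteness preserves it upward to $L[g^0][g^1][G]$, yielding $\Phi^3(x_\beta)$.

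For the reverse inclusion, suppose $x \in L[g^0][g^1][G]$ satisfies $\Phi^3(x)$ via a witness $r$. By Corollary \ref{codesdeterminerealworld}, $L[r] \models \varphi(x)$, so $L[r]$ contains an $\omega_1$-sized set of ``blocks'' $h \subset \omega_1$ such that for every $\gamma \in h$ and $n \in \omega$ the cofinal branch through $S_{\omega\gamma+2n+1}$ (resp.\ $S_{\omega\gamma+2n}$) dictated by the bit $x(n)=1$ (resp.\ $0$) is present in $L[r]$. Since each $S_\alpha$ was Suslin in $L$, any such branch must be decoded from $r$ via the almost disjoint family $D \in L$: one first recovers $Y_r \subset \omega_1$ from $r$, then extracts $X_r$ from $Y_r$ via the reshaping scheme, and these are the only branches through $\vec{S}$ available in $L[r]$.

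The task becomes showing that such a decoded $X_r$ can realize the pattern of $x$ on $\omega_1$-many blocks only if $x$ coincides with some intentionally coded $x_\beta$. I envisage three steps: (i) by the ccc property of $\forceP_\kappa$ together with a standard nice-name argument, $r$ appears at some proper initial segment of the iteration, whose analysis reveals that $X_r$ is built entirely from branches introduced by specific intentional coding stages; (ii) for each $\gamma \in h$, the genuine generic branch through the relevant tree became accessible to $L[r]$ only because some stage $\beta(\gamma)$ wrote the pattern of $x_{\beta(\gamma)}$ into the block at $\gamma$, which forces $\gamma \in h^{g_{\beta(\gamma)}}$; (iii) by the almost disjointness of the $h^{g_\beta}$'s --- which stems from the mutual genericity of the $\omega_1$-Cohen subsets produced by $\forceP^0$ --- together with a pigeonhole on the $\omega_1$-sized set $h$, there is a single $\beta^\ast < \kappa$ with $|h \cap h^{g_{\beta^\ast}}| = \omega_1$; on those blocks the pattern encoded is that of $x_{\beta^\ast}$, so $x = x_{\beta^\ast}$, contradicting the assumption that $x$ is fresh.

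The main obstacle is step (ii): rigorously ruling out that some ``mixing'' of codes from several intentional stages might produce, for a fresh $x \notin \{x_\beta\}$, an $\omega_1$-sized coherent pattern accessible in $L[r]$. This combinatorial heart of the argument draws on the independence of $\vec{S}$ (Theorem \ref{DefinitionIndependentSequence}), the Suslin-preservation of almost disjoint coding (Lemma \ref{a.d.coding preserves Suslin trees}), and the mutual genericity of the Cohen subsets supplied by $\forceP^0$, which together should force any $\omega_1$-sized pattern visible in any $L[r]$ to originate from a single intentional stage.
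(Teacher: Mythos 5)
Your forward inclusion is fine (and is indeed the easy half, handled by the lemma on $(\ast\ast)_{r}(x)$ plus Shoenfield). The problem is the reverse inclusion: your outline stops exactly where the real work is. Step (ii) --- that every block of $h$ whose branches $L[r]$ can see must originate from a single intentional coding stage --- is precisely the assertion that needs proof, and you say yourself that you do not have it. The auxiliary claim in your setup, that the only branches through $\vec{S}$ available in $L[r]$ are those decoded from $r$ via the almost disjoint family, is likewise unsubstantiated, and the pigeonhole in step (iii) would not finish the job even if (ii) were granted: knowing that on $\omega_1$-many blocks the branches present follow the pattern of some $x_{\beta^\ast}$ does not by itself give $x=x_{\beta^\ast}$; to exploit a divergent bit $x(n)\neq x_{\beta^\ast}(n)$ you still need to argue that the corresponding tree was never branched by any forcing relevant to $r$, which is the same missing argument again.

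The paper's proof avoids tracing the provenance of branches inside $L[r]$ altogether. Using the finite support and ccc structure, it places both $x$ and its witness $r$ inside an intermediate $\ZFC$ model $V=L[g^0\upharpoonright K][g^1\upharpoonright J][G_I]$ with $K$ and $I$ countable and $J$ of size $\aleph_1$ with uncountable complement. Since $x$ differs from every intentionally coded $x_{\beta}$, $\beta\in I$, one can pick a single index, say $\omega\alpha+2n+1$, at which the pattern for $x$ demands an $\omega_1$-branch, while no factor used to build $V$ ever added a branch through $S_{\omega\alpha+2n+1}$; the almost disjointness of the sets $h^{g}$ lets one choose $\alpha$ so that this tree is untouched by $g^1\upharpoonright J$. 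Independence of $\vec{S}$ (Theorem \ref{DefinitionIndependentSequence}) together with Suslin preservation of the coding forcings (Lemma \ref{a.d.coding preserves Suslin trees}) then shows that $S_{\omega\alpha+2n+1}$ is still Suslin in $V$. But ``$S_{\omega\alpha+2n+1}$ has an $\omega_1$-branch'' is $\Sigma_1(\{\omega_1\})$, hence upward absolute from $L[r]\subseteq V$ to $V$, so Corollary \ref{codesdeterminerealworld} would produce a branch in $V$ --- a contradiction. This single-index localization is the key idea your proposal is missing; without it, or a rigorous substitute for your step (ii), the proof is incomplete.
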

\begin{proof}
We work within $L[g^0][g^1][G]$. Assume, seeking a contradiction, that there exists a real $x$ such that $\Phi^3(x)$ holds, but $x$ is not among the intentionally coded reals, i.e., $x \notin \{x_{\beta} \mid \beta < \kappa\}$.
The truth of $\Phi^3(x)$ means there is a real $r$ witnessing it. Since $\forceP$ is a finite support iteration, any real in the extension $L[g^0][g^1][G]$ is actually present in an extension generated by a countable sub-iteration. Therefore, there exists a countable set $I \subset \kappa$ such that both $x$ and $r$ are elements of the intermediate model $L[g^0][g^1][G_I]$, where $G_I = G \cap \forceP_I$ and $\forceP_I$ is the sub-iteration restricted to indices in $I$.
Furthermore, coding each $x_i$ (for $i \in I$) involves countably many elements from $g^0$ and $g^1$. Thus, $x$ and $r$ actually reside in a model of the form $L[g^0 \upharpoonright K][g^1 \upharpoonright J][G_I]$ for some countable $K \subset \omega_1$ and some $J \subset \omega_1$ of size $\aleph_1$ whose complement also has size $\aleph_1$. Let $V = L[g^0 \upharpoonright K][g^1 \upharpoonright J][G_I]$. We conduct the remainder of the argument within this ZFC universe $V$, constructed inside $L[g^0][g^1][G]$.

Since $r$ witnesses $\Phi^3(x)$ in $V$, according to Corollary \ref{codesdeterminerealworld}, $r$ encodes information implying the existence of an unbounded set $h \subset \omega_1$ such that for all $\gamma \in h$:
\begin{align*}
n \in x \implies L[r] \models ``S_{\omega \gamma + 2n+1} \text{ has an } \omega_1\text{-branch}." \\
n \notin x \implies L[r] \models ``S_{\omega \gamma + 2n} \text{ has an } \omega_1\text{-branch}."
\end{align*}

Because $x$ is distinct from every $x_{\beta}$ with $\beta \in I$, there must exist $\aleph_1$-many ordinals $\alpha$ and some $n \in \omega$ where, without loss of generality, $x$ dictates a branch differently than the $x_\beta$'s. For instance, assume:
\[ L[r] \models ``S_{\omega \alpha + 2n+1} \text{ has an } \omega_1\text{-branch}," \]
but for every $\beta \in I$, if $r_{\beta}$ is the real witnessing $\Phi^3(x_{\beta})$, then:
\[ L[r_{\beta}] \models ``S_{\omega \alpha + 2n+1} \text{ does not have an } \omega_1\text{-branch}." \]

The set of indices corresponding to trees in $\vec{S}$ used by $g^1 \upharpoonright J$ forms a structure related to an almost disjoint family. We can choose such an $\alpha$ where the associated tree $S_{\omega \alpha + 2n+1}$ is not among those directly affected by $g^1 \upharpoonright J$ (which determines the components needed for $x$ and $r$). Let's fix such an $\alpha$.

We claim that no real $R$ exists within the model $V = L[g^0 \upharpoonright K][g^1 \upharpoonright J][G_I]$ such that $L[R] \models ``S_{\omega \alpha + 2n+1}$ has an $\omega_1$-branch$."$ This will yield the desired contradiction.

The claim holds because the forcing used to construct $V$ from $L$ did not explicitly add a branch through $S_{\omega \alpha + 2n+1}$. The forcing factors involved were either components of $g^0$ and $g^1$ (adding branches through other trees determined by $K$ and $J$) or the coding forcings $\operatorname{Code}(x_i)$ for $i \in I$. Coding forcings preserve Suslin trees, and the sequence $\vec{S}$ is independent. Therefore, within the model $V$, the tree $S_{\omega \alpha + 2n+1}$ remains a Suslin tree (i.e., it has no $\omega_1$-branch).
However, if there were a real $R \in V$ such that $L[R] \models ``S_{\omega \alpha + 2n+1}$ has an $\omega_1$-branch$,"$ then by upward absoluteness (as the statement is $\Sigma_1(\{\omega_1\})$), an $\omega_1$-branch through $S_{\omega \alpha + 2n+1}$ would have to exist in $V$ itself. This contradicts the fact that $S_{\omega \alpha + 2n+1}$ remains Suslin in $V$.
This contradiction refutes the initial assumption that an unintended real $x$ could satisfy $\Phi^3(x)$.
\end{proof}

As a final observation relevant for later use: the argument in the preceding proof holds even if we intersperse Cohen forcing $\mathbb{C}$ (or any other Suslin tree preserving, ccc forcing notion) within the finite support iteration. That is, the lemma remains valid for iterations whose factors are either coding forcings or standard Cohen forcing. This flexibility will be useful later, particularly for achieving goals related to increasing the size of the continuum.

\section{Suitable $\Sigma^1_n$-predicates}

We shall use the $\Sigma^1_3$-predicate ``being coded into $\vec{S}"$ (we will often write just ``being coded$"$ for the latter) to form suitable $\Sigma^1_n$-predicates $\Phi^n$ for every $n \in \omega$. These predicates share the following properties:
\begin{enumerate}
\item $L \models \forall x  \lnot(\Phi^n (x))$
\item For every real $x \in L$, there is an iteration of coding forcings $\operatorname{Code}^n (x) \in L$ such that after forcing with it, $L^{\operatorname{Code}^n (x)} \models \Phi^n (x)$, and for every real $y \ne x$, $L^{\operatorname{Code}^n (x)} \models \lnot \Phi^n(y)$.
\end{enumerate}
Most importantly, these properties remain true even when iterating the (iterations of coding forcings
$\operatorname{Code}^n (x_i)$ for a sequence of (names of) reals.

The predicates $\Phi^n(x)$ will be defined now. In the following we let
$(x,y)$ denote a real $z$ which recursively codes the pair of reals consisting  of $x$ and $y$. Likewise $(x_0,...,x_n)$ is defined.

\begin{itemize}
\item $\Phi^3 (x,y,m) \equiv \exists a_0 ( (x,y,m,a_0)$ is coded into $\vec{S})$.
\item $\Phi^4(x,y,m) \equiv \exists a_0 \forall a_1 ( (x,y,m,a_0,a_1)$ is not coded into $\vec{S} )$.
\item $\Phi^5(x,y,m) \equiv \exists a_0 \forall a_1 \exists a_2 ( (x,y,m,a_0, a_1,a_2)$ is coded into $\vec{S} )$.
\item $\Phi^6 (x,y,m) \equiv \exists a_0 \forall a_1 \exists a_2 \forall a_3 ( (x,y,m,a_0,a_1, a_2,a_3)$ is not coded into $\vec{S})$.
\item ...
\item ...
\item $\Phi^{2n} (x,y,m) \equiv \exists a_0 \forall a_1...\forall a_{2n-3} (( x,y,m,a_0,...,a_{2n-3} )$ is not coded into $\vec{S} )$.
\item $\Phi^{2n+1} (x,y,m) \equiv \exists a_0 \forall a_{1}...\exists a_{2n-2} (( x,y,m,a_0,...,a_{2n-2})$ is coded into $\vec{S} )$.
\item ...
\item...

\end{itemize}
Each predicate $\Phi^n$ is exactly $\Sigma^1_n$.
In the choice of our $\Sigma^1_n$-formulas $\Phi^n(x)$, we encounter again a periodicity phenomenon, that is two different cases depending on $n\in \omega$ being even or odd, a theme which is pervasive in this area.
It is clear that for each predicate $\Phi^n$ and each given real $x$ there is a way to create a universe in which $\Phi^n (x)$ becomes true using our coding forcings. We just need to iterate the relevant coding forcings using countable support. For $n=3$ we just need one coding forcing, for $n \ge3$ our iteration will have inaccessible length in order to catch our tail. As shown already, our coding method allows us to exactly code the tuple of reals we want to code, without accidentally adding some unwanted information. Thus the next lemma is straightforward to prove, so we just state it.
\begin{lemma}
Assume the existence of an inaccessible limit of inaccessible cardinals in $L$. Let $n \in \omega$ and let $x$ be a real in our ground model $L$. Then there is a forcing $\operatorname{Code}^n (x)$ which is $\vec S$-coding such that if $G \subset \operatorname{Code}^n (x)$ is generic,
$L[G]$ will satisfy $\Phi^n(x)$ and for every $y \ne x$, $L[G] \models \lnot \Phi^n(y)$.
This property can be iterated, that is it remains true if we replace $L$ with $L[G]$ in the above.

\end{lemma}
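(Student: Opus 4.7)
The plan is to build $\operatorname{Code}^n(x)$ as a finite support iteration of coding forcings of length some inaccessible $\kappa_n$ (available by the hypothesis on $L$), equipped with a bookkeeping on $\kappa_n$ that cofinally enumerates the relevant ``adversary tuples'' arising in the final extension. The verification in $L[G]$ is then a direct unwinding of the alternating quantifiers in $\Phi^n$ against the tuples explicitly coded by the iteration, invoking Lemma \ref{nounwantedcodes} to guarantee that nothing else gets coded. The base case $n=3$ needs only the one-step iteration coding $(x,0)$ for some canonical real $0\in L$; then the unique coded tuple is $(x,0)$, which witnesses $\Phi^3(x)$ and refutes $\Phi^3(y)$ for every $y\ne x$.

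For $n\ge 4$ I would split by parity. If $n$ is odd, $\Phi^n(y)\equiv \exists a_0\forall a_1\exists a_2\cdots\exists a_{n-3}(\text{coded})$, so I fix every $\exists$-position to the canonical real $0$ and let the bookkeeping enumerate, cofinally in $\kappa_n$, every finite adversary tuple $(a_1,a_3,\ldots,a_{n-4})$ of reals appearing in $L[G]$; at the corresponding stage I apply the coding forcing for $(x,0,a_1,0,a_3,0,\ldots,0,a_{n-4},0)$, and no tuple beginning with $y\ne x$ is ever coded. If $n$ is even, $\Phi^n(y)$ ends with $\forall(\text{not coded})$, so I arrange never to code any tuple starting with $x$ --- which automatically makes $\Phi^n(x)$ true with witness $a_0=0$ --- and instead bookkeep over tuples $(y,a_0,a_2,\ldots,a_{n-4})$ with $y\ne x$, applying at each stage the coding forcing for $(y,a_0,0,a_2,0,\ldots,a_{n-4},0)$ so as to supply the $\exists$-witnesses demanded by $\lnot \Phi^n(y)$.

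The verification in $L[G]$ is then a direct evaluation against Lemma \ref{nounwantedcodes}: the reals written into $\vec S$ are exactly the intentionally coded ones. In the odd case every adversary play for $x$ in $L[G]$ has been met, and no $(y,\cdot)$-tuple is coded at all, giving $\Phi^n(x)$ and $\lnot\Phi^n(y)$ for each $y\ne x$. In the even case no $(x,\cdot)$-tuple is coded, so $\Phi^n(x)$ holds trivially, while for each $y\ne x$ every adversary play has been addressed, refuting $\Phi^n(y)$. The iterability clause follows from the same analysis, because a finite support iteration of forcings of the form $\operatorname{Code}^n(x_i)$ is itself a finite support iteration of coding forcings --- possibly interspersed with other Suslin-tree preserving ccc steps, as the remark after Lemma \ref{nounwantedcodes} allows --- to which that lemma still applies, so the quantifier-unwinding argument goes through verbatim over any ground model of the form $L[G']$ obtained from earlier coding iterations.

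The main technical hurdle is the catch-your-tail argument guaranteeing that the bookkeeping on $\kappa_n$ really surjects onto the relevant tuples in $L[G]$, not merely those present in the ground model. This is standard for finite support constructions of inaccessible length: since the iteration is ccc, every real of $L[G]$ appears at some stage $<\kappa_n$, and a surjective bookkeeping (obtainable from a $\Diamond_{\kappa_n}$-sequence in $L$, for instance) processes every task cofinally often. One additionally checks that each coding factor preserves the relevant Suslin trees of $\vec S$ --- which is immediate from Lemma \ref{a.d.coding preserves Suslin trees} together with the independence of $\vec S$ --- so the coding machinery remains available at every stage and Lemma \ref{nounwantedcodes} can be applied in $L[G]$ as claimed.
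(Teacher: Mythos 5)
Your proposal is correct and follows essentially the route the paper itself indicates: the paper states this lemma without a written proof, remarking only that one iterates the relevant coding forcings (one step for $n=3$, an iteration of inaccessible length to catch its tail for larger $n$) and uses the fact that exactly the intended tuples get coded (Lemma \ref{nounwantedcodes}), which is precisely the bookkeeping-plus-parity-split argument with default witnesses that you spell out. The only cosmetic divergence is that the paper's remark speaks of countable support while you use finite support, which is harmless and in fact more consonant with the ccc, finite-support framework used for the main iteration.
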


\section{Defining the desired forcing iteration}

We shall work towards finding the right iteration which will eventually prove the main theorem. The technique to force the global $\Sigma$-uniformzation property is a sort of a copying mechanism where we use the coding forcings to code up infinite conjunctions of projective formulas. The construction is pioneered in \cite{BPFA and Global Uniformization} and applied there in a different context using a different coding technique. It is flexible enough to be applicable in our context with our coding machinery as well.

\subsection{Forcing $\Sigma^1_3$-uniformization}
As a first step, we shall consider the problem of forcing the $\Sigma^1_3$-uniformization property over our universe $L[g^0 \ast g^1]$. We fix some notation first. We write 
\[g^1= (g^1_{\alpha} \mid \alpha < \kappa ) \]
and 
\[ \operatorname{Code} (x,\eta) := \forceP( g^1_{\eta}, x), \]
in other words $\operatorname{Code} (x,\eta)$ will code the real $x$ into $\vec{S}$ at the $\eta$-th element of our almost disjoint family we generically created with $\forceP^1$ and the bijection $\rho: [\omega_1]^{\omega} \rightarrow \omega_1$ in $L[g^0 \ast g^1]$.

There is a very easy strategy to force the $\Sigma^1_3$-uniformization property over $L[g^0 \ast g^1]$: we pick some bookkeeping function $F \in L[g^0 \ast g^1]$ which should list all the (names of) reals in our iteration and at each stage $\beta$, under the assumption that we created already $L[g^0 \ast g^1][G_{\beta}]$, if $F(\beta)$ lists a real $x\in L[g^0 \ast g^1][G_{\beta}]$ and a natural number $k$, we ask whether there is a real $y$ such that
\[ L[g^0 \ast g^1][G_{\beta}] \models \varphi_k (x,y) \]
holds, where $\varphi_k$ is the $k$-th $\Sigma^1_3$-formula. If so, we pick the least such $y$ (in some fixed well-order), and let the value of the desired $\Sigma^1_3$-uniformizing $f_k(x)$ to be $y$. Additionally we force with $\operatorname{Code} ((x,y,a_0,k), \beta)$, for $a_0$ an arbitrary real over $L[g^0 \ast g^1][G_{\beta}]$ and obtain $L[g^0 \ast g^1][G_{\beta+1}]$. 
The resulting universe $L[g^0 \ast g^1][G_{\beta+1}]$ will satisfy that $\Phi^3(x,y,k)$ is true, whereas $\Phi^3(x,y',k)$ is not true for each $y' \ne y$. Moreover, because of upwards absoluteness of $\Sigma^1_3$-formulas, this property will remain true in all further generic extensions we create, as long as we do not force with a forcing of the form $\operatorname{Code} ((x,y,a,k), \eta)$ ever again, where $a$ is a real and $\eta$ is some ordinal.

If we repeat this reasoning for all $\Sigma^1_3$-formulas and all reals $x$, and iterate long enough in order to catch our tail, the final model $L[g^0 \ast g^1][G]$ will 
satisfy the $\Sigma^1_3$-uniformization property via
\[f_k (x)=y \Leftrightarrow \Phi^3 (x,y,k). \]
To summarize, the easy strategy to force $\Sigma^1_3$-uniformization is to consider at each step some $x$-section of some $\Sigma^1_3$-set $A_k \subset \omega^{\omega} \times \omega^{\omega}$, and if non-empty, pick the least $y$ for which $A_k(x,y)$ is true. Then force to make $\Phi^3(x,y,k)$ true and repeat.

Based forcings use this strategy for $\Sigma^1_3$-sets, while putting no constraints on $\Sigma^1_n$-sets for $n > 3$.

\begin{definition}
Assume that $\kappa$ is a regular cardinal in $L$, let $\lambda < \kappa$ and let $F: \lambda \rightarrow H(\kappa)$ be a bookkeeping function. We say that an iteration $(\forceP_{\beta}, \dot{\forceQ}_{\beta} \mid \beta < \lambda)$ is (0-)based with respect to $F$ if the iteration is defined inductively via the following rules:
Assume that $F(\beta)= (\dot{x}, \dot{k})$ where $\dot{x}$ is a $\forceP_{\beta}$-name of a real
and $\dot{k}$ a $\forceP_{\beta}$-name of a natural number which itself is the G\"odelnumber of a $\Sigma^1_3$-formula. Also assume that $V[G_{\beta}] \models \exists y (\varphi_{k}(x,y))$ and let $\dot{y}$ be the $<$-least name of such a real in some fixed well-order of $H(\kappa)$.
Then let \[ \dot{\forceQ}_{\kappa}^{G_{\beta}} := \operatorname{Code} ((x,y,k), \eta), \] where $\eta$ is the least ordinal such that \[ \forceP_{\beta} \Vdash ``\text{there is no real z such that } \operatorname{Code} (z, \check{\eta} ) \text{ is a factor of } \forceP_{\beta}" ,\] and $(x,y,k)$ should denote a real coding the tuple consisting of $x,y$ and $k$.
If $V[G_{\beta}] \models \exists y (\phi_{k}(x,y))$ is not true use the trivial forcing.

If $F(\beta)= (\dot{k}, \dot{x},\dot{y}, \dot{\iota} )$, where $\dot{k}$ is a $\forceP_{\beta}$-name of a natural number, which itself is the G\"odelnumber of a $\Sigma^1_m$-formula, $m > 3$,  $\dot{x},\dot{y}$ are names of reals, and finally $\dot{\iota}$ is a name of an element of $2= \{0,1 \}$, then we let
\[ \dot{\forceQ}_{\beta}^{G_{\beta}} := \operatorname{Code} ((x,y,k),\eta) \]
for $\eta$ the least inaccessible above $| \forceP_{\beta} |$, provided $\iota^{G_{\beta}}=1$.

If $\iota^{G_{\beta}}=0$, we force with the trivial forcing instead.

\end{definition}

\subsection{Strategy to obtain global $\Sigma$-uniformization}
We shall describe the underlying idea to force the global $\Sigma$-uniformization.
Before we start, we fix some notation which should simplify things a bit. First, whenever we use an iteration using coding forcings $\operatorname{Code} (x,\eta)$ over the  universe $L[g^0\ast g^1]$, and we want to define the $\beta$-th factor $\dot{\forceQ}_{\beta}^{G_{\beta}}$  of our iteration namely a forcing of the form $\operatorname{Code} (z,\eta)$, we typically will not specify the ordinal $\eta$ anymore as it will always be the least ordinal $\eta$ such that 
\[ L[g^0][g^1][G_{\beta}]  \Vdash ``\text{there is no real z such that } \operatorname{Code} (z, \check{\eta} ) \text{ is a factor of } \forceP_{\beta}" .\] We will from now on just write $\operatorname{Code} (x)$, where we actually should have written $\operatorname{Code}(x,\eta)$. Second, we will typically code reals $z$ which in fact are recursive codes for tuples of reals $(x,y,m, a_0,a_1,...,a_n)$, for which we just write
$\operatorname{Code} (x,y,m,a_0,...,a_n)$, instead of $\operatorname{Code} ((x,y,m,a_0,...,a_n))$. Now back to the discussion of the strategy we aim to use in our desired forcing.

The definition of the factors of the iteration will depend on whether the formula
$\varphi_m$ we consider at our current stage is in $\Sigma^1_{2n}$ or in $\Sigma^1_{2n+1}$ where $n\ge 2$. We start with the case where $\varphi_m$ appears first on an odd level of the projective hierarchy.
Assume that $$\varphi_m \equiv \exists a_0 \forall a_1 \exists a_2 ... \exists a_{2n-2} \psi( x,y,a_0,a_1,... a_{2n} )$$  is a $\Sigma^1_{2n+1}$-formula (where $\psi(x,y,a_0,...a_{2n})$ is a $\Pi^1_2$-formula quantifying over the two remaining bounded variables $a_{2n-1}$ and $a_{2n}$) and  $x$ is a real. We want to find a value for the uniformization function for $\varphi_m$ at the $x$-section of $A_m$, the latter being the projective set in the plane defined by $\varphi_m$.

To start, we list all triples of reals $$ ((x,y^0,a_0^0) , (x, y^1, a_0^1), (x,y^2,a_0^2) ,...)$$ according to some fixed well-order $<$. To rule out a degenerate case, we assume that for any real $x$, the first such triple in our wellorder is always $(x,0,0)$, i.e. for all $x$, $a_0=y_0=0$. We decide that if $\forall a_1 \exists a_2...\psi (x,0,0,a_1,...)$ is true, then our uniformizing function for $\varphi_m$ at $x$ should have value $0$. Note that $\forall a_1 \exists a_2 ... \psi(x,0,0,a_1,...)$  is $\Pi^1_{2n}$.

Otherwise we work under the $\Sigma^1_{2n}$-assumption 
$\exists a_1 \forall a_2... \lnot \psi (x,0,0,a_1,...)$ and continue.
  There will be a $<$-least triple $(x,y^{\alpha}, a_0^{\alpha} )$, $\alpha \ge 1$, for which $\forall a_1 \exists a_2... \exists a_{2n-2} \psi (x,y^{\alpha},a^{\alpha}_0, a_1,a_2...)$ is true, otherwise the $x$-section would be empty and there is nothing to uniformize.

The goal will be to set up the iteration in such a way that all triples $(x,y^{\beta} ,a_0^{\beta})$, $\beta > \alpha$ will satisfy the following formula, which is $\Pi^1_{2n+1}$ in the parameters $( x , y^{\beta}, a^{\beta}_0 ,m)$ as is readily checked:
\begin{align*}
\forall a_{1} \exists a_{2} \forall a_{3}... \exists a_{2n-2} (( x,y^{\beta} ,m, a_0^{\beta},a_1,a_2...,a_{2n-2} )\text{ is not coded into $\vec{S} )$.}
\end{align*}
At the same time the definition of the iteration will ensure that for every $\beta \le \alpha$ our tuple $(x,y^{\beta}, a^{\beta}_0)$ will satisfy 
\begin{align*}
\exists a_{1} \forall a_{2} \exists a_{3}... \forall a_{2n-2} (( x,y^{\beta} ,m, a_0^{\beta},a_1,a_2...,a_{2n-2} )\text{ is  coded into $\vec{S} )$.}
\end{align*}
Provided we succeed, the pair $(x,y^{\alpha})$ will then be the unique solution to the following formula, which is $\Sigma^1_{2n+1}$, and which shall be the defining formula for our uniformizing function:
\begin{align*}
\sigma_{\text{odd}}(x,y,m) \equiv \exists a_0 ( \forall a_1 \exists a_2...\psi (x,y,a_0,a_1,...) \land \\
\lnot ( \forall a_1 \exists a_2... ( ( x,y ,m, a_0 ,a_1 ,a_2...) \text{ is not coded into $\vec{S}$}) )
\end{align*}
Indeed, for all $\beta> \alpha$, $(x,y^{\beta},a^{\beta}_0)$ can not satisfy the second subformula of $\Psi$ whereas for all
$\beta< \alpha$ (note here that $\alpha \ge 1)$, $(x,y^{\beta} , a_0^{\beta})$ can not satisfy the first subformula, as $(x,y^{\alpha}, a_0^{\alpha} )$ is the least such triple.

If we assume that $\varphi_m$ is on an even level of the projective hierarchy we will define things in a dual way to the odd case. 
Assume that $\varphi_m \equiv \exists a_0 \forall a_1 \exists a_2 ... \forall a_{2n-3} \psi( x,y,a_0,a_1,... a_{2n-3} )$ is a $\Sigma^1_{2n}$-formula and  $x$ is a real. We want to find a value for the uniformization function for $\varphi_m$ at the $x$-section.

Again, we list all triples of reals $ ((x,y^0,a_0^0) , (x, y^1, a_0^1), (x,y^2,a_0^2) ,...)$ according to some fixed well-order $<$ with $a_0=y_0=0$. We assume that $\exists a_1 \forall a_2... \lnot \psi(x,0,0,a_0,a_1,...)$ (otherwise $0$ would be the value for the uniformizing function at $x$ for $\varphi_m$) so  there will be a $<$-least triple $(x,y^{\alpha}, a_0^{\alpha} )$, $\alpha \ge 1$, for which $\forall a_1 \exists a_2... \forall a_{2n-3} \psi (x,y^{\alpha},a^{\alpha}_0, a_1,a_2...)$ is true, otherwise the $x$-section would be empty and there is nothing to uniformize.

The goal will be to set up the iteration in such a way that all triples $(x,y^{\beta} ,a_0^{\beta})$, $\beta > \alpha$ will satisfy the following formula, which is $\Pi^1_{2n}$ in the parameters $( x , y^{\beta}, a^{\beta}_0,m)$ as is readily checked:
\begin{align*}
\forall a_{1} \exists a_{2} \forall a_{3}... \exists a_{2n-3} (( x,y^{\beta} ,m, a_0^{\beta},a_1,a_2...,a_{2n-3} )\text{ is coded into $\vec{S} )$.}
\end{align*}
At the same time the definition of the iteration will ensure that for every $\beta  \le \alpha$  $(x,y^{\beta}, a^{\beta}_0)$ will satisfy 
\begin{align*}
\exists a_{1} \forall a_{2} \exists a_{3}... \exists a_{2n-3} (( x,y^{\alpha} ,m, a_0^{\alpha},a_1,a_2...,a_{2n-3} )\text{ is not  coded into $\vec{S} )$.}
\end{align*}
Provided we succeed, the pair $(x,y^{\alpha})$ will then be the unique solution to the following formula, which is $\Sigma^1_{2n}$, and which shall be the defining formula for our uniformizing function:
\begin{align*}
\sigma_{\text{even}} (x,y,m) \equiv \exists a_0 ( \forall a_1 \exists a_2...\psi (x,y,a_0,a_1,...) \land \\
\lnot ( \forall a_1 \exists a_2... ( ( x,y ,m, a_0 ,a_1 ,a_2...) \text{ is coded into $\vec{S}$}) )
\end{align*}

\subsection{Forcing global $\Sigma$-uniformization}
This section will make precise the strategy outlined in the previous section.
We will need the following notions. For an arbitrary real $x$, list the triples
$(x,\dot{y}^0,\dot{a}_0^0), (x,\dot{y}^1,\dot{a}_0^1), (x,\dot{y}^2,\dot{a}_0^2),...)$ according to our fixed well-order $<$. To be more precise we list all the $\forceP$-names of reals which are recursive codes for triples of $\forceP$-names of reals, where $\forceP$ is a $\kappa$-sized
partial order. The list should have the property that names of longer iterations always appear after the names of the shorter iterations. We also assume that for every $x$, $\dot{a}_0=\dot{y}_0=0$ to rule out a degenerate case in the following. This has technical advantages as will become clear as we proceed in the argument. The upshot of this is that whenever $(x,a_0=0)$ is in our $\Sigma^1_{n}$-set $A_m$ and the membership is already witnessed by $a_0=0$, then we let $0$ be the $x$-value of our uniformizing function of $A_m$.
Thus we can, without loss of generality, always work under the $\Sigma^1_{n-1}$-assumption that $0$ does not witness that $(x,0)$ is an element of $A_m$. As we aim for a $\Sigma^1_n$-definition of the uniformizing function, this assumption is harmless with regards of complexity.

For each ordinal $\alpha< 2^{\aleph_0}$ we fix
bijections $\pi_{\alpha}: (2^{\aleph_0})^{\alpha} \rightarrow 2^{\aleph_0}$ (we assume w.l.o.g that such a bijection exist as we always force its existence if we blow up the contiuum with, say, Cohen forcing, to size $|\alpha|$ and then use a maximal almost disjoint family $\mathcal{F}$ and almost disjoint coding forcing relative to $\mathcal{F}$ to ensure that in the resulting model $|(2^{\aleph_0})^{\alpha}| = 2^{\aleph_0}$ ). These bijections are of course sensitive to the surrounding universe and we assume that, as we iteratively enlarge our universe via a forcing, the bijections extend each other so that they cohere. To be more precise if $\pi_{\alpha}^{\beta}: (2^{\aleph_0})^{\alpha} \rightarrow 2^{\aleph_0}$ is our chosen family of bijections\footnote{These families of bijections are responsible for the fact that the coding construction presented in this paper and the coding construction which forces the, say, $\Pi^1_3$-uniformization property or the $\Pi^1_3$-reduction property (see \cite{Ho4} and \cite{Ho2}) can not be combined. } in the universe $V[G_{\beta}]$ which arises at the $\beta$-th stage of our iteration,  and $\pi_{\alpha}^{\gamma}: (2^{\aleph_0})^{\alpha} \rightarrow 2^{\aleph_0}$, $\beta< \gamma$ is the  family of bijections we fix at the $\gamma$-th stage $V[G_{\gamma}]$, then for every $\alpha< (2^{\aleph_0})^{V[G_{\beta}]}$,  \[\pi^{\gamma}_{\alpha} \upharpoonright (2^{\omega} \cap V[G_{\beta}] ) = \pi^{\beta}_{\alpha} .\]

Let \[F : \kappa \rightarrow \kappa^{\omega} \] be some bookkeeping function in $L[g^0\ast g^1]$ which shall be our ground model and which guides our iteration. The choice of $F$ does not really matter, it is sufficient to assume that every $x \in \kappa^{\omega}$ has an unbounded pre-image under $F$.
We assume that we have defined already the following list of notions:
\begin{itemize}

\item We defined already our iteration $\forceP_{\beta} \in L[g^0 \ast g^1]$ up to stage $\beta$.
\item We picked a $\forceP_{\beta}$-generic filter $G_{\beta}$ for $\forceP_{\beta}$ and work, as usual, over $L[g^0\ast g^1][G_{\beta}]$.
\item In $L[g^0 \ast g^1] [G_{\beta}]$ we picked a family $\{ \pi_{\alpha} \mid \alpha < 2^{\aleph_0} \}$ of bijections 
of $(2^{\aleph_0})^{\alpha}$ and $2^{\aleph_0}$. These bijections should cohere with the older families of bijections, as discussed above. We assume without loss of generality that such a family exists, if not then we alter $L[g^0 \ast g^1][G_{\beta}]$ with a ccc forcing (namely e.g. an iteration of Cohen forcing followed by an iteration of almost disjoint coding forcing relative to some maximal, almost disjoint family of reals) such that such a family exists in the new universe.
\end{itemize}

We assume  that the bookkeeping function $F$ at $\beta$ hands us an $\omega$ tuple $(\gamma_0,\gamma_1,...)$ of ordinals below $\kappa$ which corresponds to an $ \omega$-tuple $(\dot{x},\dot{m}, \dot{\alpha}, \dot{b}_0,\dot{b}_1,\dot{b}_2,...)$ where $\dot{x}$ and $\dot{b}_n$ are $\forceP_{\beta}$-names of a reals and $\dot{m}$ and $\dot{\alpha}$ are $\forceP_{\beta}$-names of a natural number and of an ordinal  bigger than 0 and less than the size of the continuum respectively  (in fact at each stage we will only need finitely many of those names of reals $\dot{b}_i$ and the rest of the information will be discarded). The correspondence is given by demanding that $\gamma_0 \le \beta$ and  $\dot{x}$ is the $\gamma_1$-th name of a real in $L[g^0][g^1][G_{\gamma_0}]$, $\dot{b}_0$ is the $\gamma_2$-th name of a real in $L[g^0][g^1][G_{\gamma_0}]$ and so forth.

We let $x=\dot{x}^{G_{\beta}}$, and define $b_n,m,\alpha$ and the $G_{\beta}$-evaluation of our list of names of reals $(x,\dot{y}_0,\dot{a}_0),...$ accordingly. 
The natural number $m$ is the G\"odelnumber $\# \varphi$ of a $\Sigma^1_n$-formula $\varphi(x,y)$ in two free variables.
Our goal is to define the forcing $\dot{\forceQ}_{\beta}$ we want to use at stage $\beta$, and to define the notion of $\alpha_{\beta}+1$-based forcing.
We consider various cases for our definition.

\subsubsection{Case 1, odd projective level}

In the first case we write $\varphi_m= \exists a_0 \forall a_1... \exists a_{2n-2} \psi (x,y, a_0,...,a_{2n-2} )$, where $\psi$ is a $\Pi^1_2$-formula (and $\varphi_m$ is a 
$\Sigma^1_{2n+1}$ formula for $n \ge 1$) for the formula with G\"odelnumber $m$ which is handed to us by the value $F(\beta)$.
We repeat our assumption form above that $F(\beta)$ determines
a tuple $b_1,...b_{2n-2}$ of real numbers, $x$, and an ordinal $0<\alpha< 2^{\aleph_0}$ with the assigned reals $a_0^{\alpha}$ and $y^{\alpha}$ which stem from our wellordered list of triples $(x,y^0,a_0^0), (x,y^1, a_0^1),...(x,y^{\alpha},a_0^{\alpha}),...$ \footnote{This list should be injective, that is whenever a triple  $(x,(\dot{y}_{\alpha})^{G_{\beta}},(\dot{a}_{\alpha})^{G_{\beta}})$ appears which appeared already earlier in our list we drop it.} a we fixed in advance.

Recall that we have our fixed bijection $\pi_{\alpha}:( 2^{\aleph_0})^{\alpha} \rightarrow 2^{\aleph_0}$.

Letting $\pi_{\alpha}^{-1} (b_k):= (b^{\eta}_k \mid \eta < \alpha)$ then case 1 is the condition that
\begin{align*}
\lnot \psi (x,y^{\eta},a_0^{\eta}, b_1^{\eta}, b_2^{\eta},...,b_{2n-2}^{\eta} ) \text{ is true for every $\eta < \alpha$.}
\end{align*}

Then we use the coding forcing
\begin{align*}
\operatorname{Code} (\# \psi,x,y^{\alpha},a_0^{\alpha}, b_1,b_2,...,b_{2n-2}) =: \dot{\forceQ}_{\beta}
\end{align*}
as the $\beta$-th factor of our iteration.

\subsubsection{Case 2, odd projective level}
We again let $\varphi_m= \exists a_0 \forall a_1... \exists a_{2n-2} \psi (x,y, a_0,...,a_{2n-2} )$  and we assume that the bookkeeping $F$ at $\beta$ hands us
a tuple $b_1,...b_{2n-2}$ of real numbers, $x$, and $0<\alpha< 2^{\aleph_0}$ with the assigned reals $a_0^{\alpha}$ and $y^{\alpha}$. 

Case 2 is the condition that if $\pi_{\alpha}^{-1} (b_k):= (b^{\eta}_k \mid \eta < \alpha)$ then
\begin{align*}
 \psi (x,y^{\eta},a_0^{\eta}, b_1^{\eta}, b_2^{\eta},...,b_{2n-2}^{\eta} ) \text{ is true for an $\eta < \alpha$.}
\end{align*}

In this situation we force with the trivial forcing at the $\beta$-th stage.

\subsubsection{Case 3, even projective level}
This is the dual case to the first one, but this time $m$ is the G\"odelnumber of a formula which belongs to an even projective level. We write
\[\varphi_m \equiv \exists a_0 \forall a_1... \forall a_{2n-3} (\psi (x,y,a_0,...,a_{2n-3} )),\]
where $\psi$ is a $\Sigma^1_2$-formula and $\varphi_m$ is a $\Sigma^1_{2n}$-formula.

The bookkeeping $F$ at $\beta$ hands us
a tuple $b_1,...,b_{2n-3}$ of real numbers, $x$, and $0<\alpha< 2^{\aleph_0}$ with the assigned reals $a_0^{\alpha}$ and $y^{\alpha}$.

Recall that we have our fixed bijection $\pi_{\alpha}:( 2^{\aleph_0})^{\alpha} \rightarrow 2^{\aleph_0}$.

Case 3 is the condition that if $\pi_{\alpha}^{-1} (b_k):= (b^{\eta}_k \mid \eta < \alpha)$ then
\begin{align*}
\lnot \psi (x,y^{\eta},a_0^{\eta}, b_1^{\eta}, b_2^{\eta},...,b_{2n-3}^{\eta} ) \text{ is true for every $\eta < \alpha$.}
\end{align*}

We do not use a forcing in this case.

\subsubsection{Case 4, even projective level}
This is dual to  case 2. 

We  assume that if $\pi_{\alpha}^{-1} (b_k):= (b^{\eta}_k \mid \eta < \alpha)$ then
\begin{align*}
 \psi (x,y^{\eta},a_0^{\eta}, b_1^{\eta}, b_2^{\eta},...,b_{2n-3}^{\eta} ) \text{ is true for an $\eta < \alpha$.}
\end{align*}
In this situation we use the coding forcing of the form
\[ \operatorname{Code} (\# \varphi_m, x,y^{\alpha}, a_0^{\alpha},b_1,...,b_{2n-3} ) \]
as the $\beta$-th forcing $\dot{\forceQ}_{\beta}$ in our iteration.

\par
\medskip
We use finite support for this iteration and consequently the iteration is a ccc forcing over $L[g^0 \ast g^1]$ hence preserves cofinalities.

This ends the definition of our iteration. What is left is to show that the definition works in that it will produce a universe where $\sigma_{\text{even}}$ and $\sigma_{\text{odd}}$ serve as definitions of uniformizing functions, when applied in the right context.
\subsection{Forcing the continuum being large}
Forcing a large continuum will be achieved by the standard method. 
As in the last section, we assume that we are at stage $\beta < \kappa$ of our iteration and we have defined already $\forceP_{\beta}$. Let $G_{\beta}$ be a $\forceP_{\beta}$ generic filter over $L[g^0 \ast g^1]$ and our fixed bookkeeping function at stage $\beta$ hands us some default set, say $0$. In this situation we will just force with plain Cohen forcing, 
\[ \dot{\forceQ}_{\beta}^{G_{\beta}} := \mathbb{C} \]
and move on to the next step of the iteration.

\subsection{Towards a $\Delta^1_3$-well-order of the reals}
Adding forcings which will eventually produce a $\Delta^1_3$-definable well-order of the reals of the final universe does not cause problems as well. We assume that $\beta< \kappa$, $G_{\beta} \subset \forceP_{\beta}$ a generic filter over $L[g^0][g^1]$ and the bookkeeping $F(\beta)$ yields a pair of names of ordinals $(\dot{\eta}_0,\dot{\eta}_1)$ which evaluate with the help of $G_{\beta}$ to $\eta_0,\eta_1<\kappa$ and we assume that $\eta_0 \le \beta$. We assume that in $L[g^0][g^1][G_{\eta}]$, the $\eta_1$-th (in $L$'s canonical global well-order) pair of $\forceP_{\eta_0}$-names of reals is  $(\dot{b_0},\dot{b_1})$. We proceed as follows. First we evaluate $\dot{b_0}^{G_{\beta}} =b_0$ and $\dot{b_1}^{G_{\beta}}=b_1$ and consider the $<$-least (again, $<$ denotes $L$'s canonical global well-order) forcing names $\sigma_0,\sigma_1$ such that $\sigma_0^{G_{\beta}}=b_0$ and $\sigma_1^{G_{\beta}}=b_1$.
If $\sigma_0 < \sigma_1$ then we use
\[\dot{\forceQ}_{\beta}^{G_{\beta}} := \operatorname{Code} ({b}_0 ,{b_1}), \]
and if $\sigma_1 < \sigma_0$ we let
\[ \dot{\forceQ}_{\beta}^{G_{\beta}} := \operatorname{Code} (b_1,b_0). \]

\section{Properties of the resulting universe}

We shall prove the main properties of our just defined universe now.
Let $G_{\kappa}$ be the final generic filter for our just defined, $\kappa$-length iteration $(\forceP_{\beta},\dot{\forceQ}_{\eta} \mid \beta \le \kappa, \eta < \kappa)$ with finite support. Note that each factor $\dot{\forceQ}_{\beta}$ of our iteration is either an almost disjoint coding forcing of the form $\operatorname{Code} (x)$, for some real $x$, or Cohen forcing. Thus $\forceP_{\kappa}$ has the ccc and all cardinals are preserved. As we will force $\kappa$-many times with Cohen forcing, $\forceP_{\kappa}$ will force that the continuum is $\ge \kappa$.

We now turn to prove that the global $\Sigma$-uniformization property is forced by $\forceP_{\kappa}$.

\begin{lemma}
Let $G_{\kappa}$ denote a generic filter for the full, $\kappa$-length iteration.
\begin{enumerate}

\item
Then $L[g^0][g^1][G_{\kappa}]$ satisfies that
whenever $\varphi_m$ is on an odd projective level, say $\varphi_m= \exists a_0 \forall a_1...\exists a_{2n-2} \psi(x,y,a_0,...,a_{2n-2})$ and $(x,y^{\alpha},a_0^{\alpha})$ is such that
\[ L[g^0][g^1][G_{\kappa}] \models \forall a_1 \exists a_2... \exists a_{2n-2} \psi (x,y^{\alpha},a_0^{\alpha}) \]
Then for each $\beta > \alpha$
\[ L[g^0][g^1][G_{\kappa}] \models \forall a_1 \exists a_2...\exists a_{2n-2} ( (\#\psi,x,y^{\beta} ,a_0^{\beta}, a_1,..,a_{2n-2} ) 
\text{ is not coded }) \]
\item If $\varphi_m= \exists a_0 \forall a_1...\forall a_{2n-3} \psi(x,y,a_0,...,a_{2n-3})$ is a formula on the even projective level and $(x,y^{\alpha},a_0^{\alpha})$ is such that
\[ L[g^0][g^1][G_{\kappa}] \models \varphi_m (x,y^{\alpha},a_0^{\alpha}) \]
Then for each $\beta > \alpha$
\[ L[g^0][g^1][G_{\kappa}] \models \forall a_1 \exists a_2...\forall a_{2n-3} ( (\#\psi, x,y^{\beta} ,a_0^{\beta}, a_1,..,a_{2n-3} ) 
\text{ is coded }) \]
\end{enumerate}

\end{lemma}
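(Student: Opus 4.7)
The plan is to verify both parts directly by exhibiting the required witnesses, with the central tool being Lemma \ref{nounwantedcodes}: a real is coded into $\vec{S}$ in $L[g^0][g^1][G_\kappa]$ precisely when it was explicitly coded by some factor of the iteration. The combinatorial engine is the family of bijections $\pi_\beta : (2^{\aleph_0})^\beta \to 2^{\aleph_0}$, which allow one to pack a chosen value at coordinate $\alpha$ into a single real at level $\beta$ while leaving the other coordinates free.

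For part 1 (odd case), fix $\beta > \alpha$ and any real $a_1$ in the final model. I produce existential witnesses $a_2, a_4, \ldots, a_{2n-2}$ by an interleaved skolem construction: having received a universal $a_{2k-1}$, decode $b_j^\alpha := (\pi_\beta^{-1}(a_j))_\alpha$ for each odd $j \leq 2k-1$, apply the hypothesis $\forall a_1' \exists a_2' \cdots \exists a_{2n-2}' \psi(x, y^\alpha, a_0^\alpha, a_1', \ldots)$ to pick a skolem witness $a_{2k}^*$, and set $a_{2k} := \pi_\beta(\vec{c})$ with $\vec{c}_\alpha = a_{2k}^*$ and $\vec{c}_\eta$ arbitrary for $\eta \neq \alpha$. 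After $n-1$ rounds, $\psi(x, y^\alpha, a_0^\alpha, b_1^\alpha, \ldots, b_{2n-2}^\alpha)$ holds. If the tuple $(\#\psi, x, y^\beta, a_0^\beta, a_1, \ldots, a_{2n-2})$ were coded in the final model then, by Lemma \ref{nounwantedcodes} and inspection of the iteration, it must have been coded via Case 1 at some stage with bookkeeping singling out index $\beta$ and reals $a_1, \ldots, a_{2n-2}$; but Case 1 demands $\lnot \psi$ at every coordinate $\eta < \beta$, contradicting what we just arranged at $\eta = \alpha$.

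Part 2 (even case) is treated dually. The same decode/skolemize/encode scheme produces $a_2, a_4, \ldots$ so that $\psi(x, y^\alpha, a_0^\alpha, b_1^\alpha, \ldots, b_{2n-3}^\alpha)$ holds for the decoded sequence at coordinate $\alpha$. Since the bookkeeping function $F$ has unbounded preimage on every $\omega$-tuple of parameters, there is a stage $\gamma$ where bookkeeping presents exactly the index $\beta$ and our $a_1, \ldots, a_{2n-3}$; at that stage Case 4 applies and codes $(\#\psi, x, y^\beta, a_0^\beta, a_1, \ldots, a_{2n-3})$. Upward absoluteness of the $\Sigma^1_3$ statement ``is coded'' then transfers the conclusion to the final model.

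The main obstacle is making sure the skolem construction genuinely respects the quantifier prefix: each existential witness $a_{2k}$ must be picked having seen only the universal moves $a_1, \ldots, a_{2k-1}$, and the freely chosen coordinates $\vec{c}_\eta$ for $\eta \neq \alpha$ must not inadvertently trigger Case 1 or Case 4 with an unintended tuple. The former is arranged by the iterative nature of the construction; the latter is exactly what Lemma \ref{nounwantedcodes} guarantees, since coding at a stage only happens with the precise tuple bookkeeping hands to us. A secondary check is that the coherence of the $\pi_\beta$'s across intermediate models ensures the decoding is well-defined regardless of when in the iteration one inspects the tuple.
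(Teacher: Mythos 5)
Your proposal is correct and follows essentially the same route as the paper: the interleaved decode/Skolemize/encode construction with $\pi_\beta$ is exactly the paper's translation between the truth of the infinite disjunction $\bigvee_{\eta<\beta}\psi(x,y^{\eta},a_0^{\eta},\dots)$ and the (non-)coding of the tuple, with Lemma \ref{nounwantedcodes} supplying the ``only intentionally coded reals are coded'' direction and unbounded bookkeeping plus upward absoluteness supplying the even case. You merely spell out the dual even-level argument that the paper leaves implicit, which is a welcome but not essentially different elaboration.
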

\begin{proof}
We will only proof the first item, as the second is shown in the dual way. Let $\beta > \alpha$.
Work in $L[g^0][g^1][G_{\kappa}]$. We know that $\forall a_1 \exists a_2 ... \exists a_{2n-2} \psi (x,y^{\alpha},a_0^{\alpha})$ is true. The way we defined our iteration, in particular case 2, implies that we can use the bijection $\pi_{\beta} : (2^{\omega})^{\beta} \rightarrow 2^{\omega}$, to translate between true infinite disjunctions of the form
\[ \bigvee_{\eta <\beta} \psi (x,y^{\eta}, a^{\eta}_0,b^{\eta}_1,...,b^{\eta}_{2n-2} ) \] and 
``$(\# \psi,x,y^{\beta},a_0^{\beta}, b_1,b_2,...,b_{2n-2} )$ is not coded into $\vec{S}"$, where
$b_i= \pi_{\beta} ((b^{\eta}_i \mid \eta < \beta))$. As $\forall a_1 \exists a_2 ... \exists a_{2n-2} \psi (x,y^{\alpha},a_0^{\alpha})$ is true, this witnesses that our assertion
$\forall a_1 \exists a_2... \exists a_{2n-2}$ $((\#\psi, x,y^{\beta},a^{\beta}_0,a_1,...,a_{2n-2}) $ is not coded into $\vec{S})$ is true in $L[g^0][g^1][G_{\kappa}]$.

\end{proof}

\begin{lemma}
Again $G_{\kappa}$ denotes a generic filter for the entire, $\kappa$-length iteration.
\begin{enumerate}
\item If $\alpha$ is least such that
\[L[g^0][g^1][G_{\kappa}] \models \forall a_1...\exists a_{2n-2} \psi(x,y^{\alpha},a^{\alpha}_0,...,a_{2n-2}),\]
then 
\[ L[g^0][g^1][G_{\kappa}] \models \exists a_1 \forall a_2 ...\forall a_{2n-2} ( (\# \psi, x,y^{\alpha},a_0^{\alpha},a_1,a_2,...,a_{2n-2} ) \text{ is coded}.) \]
\item If $\alpha$ is least such that
\[L[[g^0][g^1]G_{\kappa}] \models \forall a_1...\exists a_{2n-3} \psi(x,y^{\alpha},a^{\alpha}_0,...,a_{2n-3}),\]
then 
\[ L[g^0][g^1][G_{\kappa}] \models \exists a_1 \forall a_2 ...\forall a_{2n-3} ( (\# \psi, x,y^{\alpha},a_0^{\alpha},a_1,a_2,...,a_{2n-3} ) \text{ is not coded}.) \]
\end{enumerate}
\end{lemma}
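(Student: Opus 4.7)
The plan is to prove item~(1) in detail; item~(2) will follow by the dual argument, exchanging Case~1 (coding on the odd level) with Case~4 (coding on the even level) and swapping ``coded''/``not coded''. I work throughout in $W := L[g^0][g^1][G_\kappa]$.

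Fix the least $\alpha$ from the hypothesis of item~(1). By minimality, for every $\eta < \alpha$ the dual statement
\[
\exists a_1\,\forall a_2\,\exists a_3\,\cdots\,\forall a_{2n-2}\;\lnot\psi(x,y^\eta,a_0^\eta,a_1,\ldots,a_{2n-2})
\]
holds in $W$. Using $W$'s canonical well-order I fix Skolem functions $S_1,S_3,\ldots,S_{2n-3}$: given $\eta < \alpha$ and already-chosen inputs $b_2^\eta,\ldots,b_{2k}^\eta$ for the universal slots, $S_{2k+1}$ outputs the $<_W$-least witness for the next existential quantifier. By design, for any completion $(b_2^\eta,b_4^\eta,\ldots,b_{2n-2}^\eta)$ of the universal slots the negation $\lnot\psi(x,y^\eta,a_0^\eta,b_1^\eta,\ldots,b_{2n-2}^\eta)$ holds. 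Since $W \models \ZFC$ and $\alpha < 2^{\aleph_0}$ in $W$, the entire sequence $(b_i^\eta)_{\eta<\alpha}$ is a set of $W$.

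Next I set $a_1^\ast := \pi_\alpha\bigl((b_1^\eta)_{\eta<\alpha}\bigr)$ and, for any real $a_2 \in W$, decompose $\pi_\alpha^{-1}(a_2) = (b_2^\eta)_{\eta<\alpha}$, apply $S_3$ termwise to produce $(b_3^\eta)_{\eta<\alpha}$, and set $a_3^\ast := \pi_\alpha\bigl((b_3^\eta)_{\eta<\alpha}\bigr)$. Iterating this recipe yields $a_5^\ast,\ldots,a_{2n-3}^\ast$ as explicit functions of the universally quantified reals $a_2,a_4,\ldots,a_{2n-4}$. For any final real $a_{2n-2} \in W$, decomposing via $\pi_\alpha^{-1}$ and invoking the Skolem property termwise yields $\lnot\psi(x,y^\eta,a_0^\eta,b_1^\eta,\ldots,b_{2n-2}^\eta)$ for every $\eta < \alpha$ --- exactly the Case~1 hypothesis at ordinal $\alpha$ on the tuple $(x,y^\alpha,a_0^\alpha,a_1^\ast,a_2,a_3^\ast,\ldots,a_{2n-2})$. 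Because the bookkeeping $F$ meets every $\omega$-tuple of names unboundedly often, at some stage $\beta<\kappa$ the prescribed factor $\dot{\forceQ}_\beta$ equals $\operatorname{Code}(\#\psi,x,y^\alpha,a_0^\alpha,a_1^\ast,a_2,\ldots,a_{2n-2})$, so this tuple is written into $\vec{S}$; upward absoluteness of ``coded into $\vec{S}$'' (the witnessing real $r_Y$ persists and $(\ast\ast)_{r_Y}$ is $\Pi^1_2$, hence Shoenfield-absolute) carries the property up to $W$.

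The hardest point is checking that the construction really closes up inside $W$: the Skolem sequences $(b_i^\eta)$ must be sets of $W$ (not merely of some later forcing extension), and the coherence of the bijections $\pi_\alpha^\gamma$ fixed in Section~5.3 must make the decomposition $\pi_\alpha^{-1}(a_2)$ computed at stage $\beta$ agree with the one in $W$, so that Case~1 triggers on the intended input. Item~(2) then follows by the same blueprint applied at the even projective level: I obtain an $a_1^\ast$ so that for every universal choice Case~3 applies at ordinal~$\alpha$; since Case~3 introduces no forcing and Lemma~\ref{nounwantedcodes} rules out any accidental coding, the tuple is never written into $\vec{S}$, giving ``not coded'' in $W$.
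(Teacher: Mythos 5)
Your proposal is correct and follows essentially the same route as the paper's own proof: use minimality of $\alpha$ to get witnesses for $\lnot\psi$ at every $\eta<\alpha$, amalgamate them through $\pi_{\alpha}$ so that for every choice of the universally quantified reals the termwise Case~1 (dually Case~3/4) condition holds, and conclude that the relevant tuples are (respectively are not) coded. Your added details --- explicit Skolem choices, the bookkeeping catching the tuple, upward absoluteness of ``coded'', and the appeal to Lemma~\ref{nounwantedcodes} for the ``not coded'' direction --- are exactly the points the paper leaves implicit.
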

\begin{proof}
We shall show only the first item of the lemma, as the second one is proved in the dual way.
As $\alpha$ is the first ordinal for which \[L[[g^0][g^1]G_{\kappa}] \models \forall a_1...\exists a_{2n-2} \psi(x,y^{\alpha},a^{\alpha}_0,...,a_{2n-2}),\] we know that
$\forall \beta < \alpha$, $L[g^0][g^1][G_{\kappa}] \models \exists a_1 \forall a_2,...,\forall a_{2n-2} (\lnot \psi (x,y^{\beta},a_0^{\beta}, a_1,...))$.
In particular, for every $\beta < \alpha$ there is a real $a_1^{\beta}$ such that for every $a_2^{\beta}$ there is a real $a_3^{\beta}$ and so on such that $\lnot \psi$ holds. Using the bijection $\pi_{\alpha}$, we can find an $a_1= \pi_{\alpha} ( (a_1^{\beta})_{\beta < \alpha} )$ which has the property that for any real $a_2$ there will be a real $a_3 = \pi_{\alpha} (a_3^{\beta})_{\beta <\alpha} )$ such that for any real $a_4$ and so on $\lnot \psi$ is true. 

But this translates via the case 1 rule of our iteration to the assertion that
\[ L[g^0][g^1][G_{\kappa}] \models \exists a_1 \forall a_2 ...\forall a_{2n-2} ( (\# \psi, x,y^{\alpha},a_0^{\alpha},a_1,a_2,...,a_{2n-2} ) \text{ is coded}.)\]
This is what we wanted.
\end{proof}

\begin{lemma}
In $L[g^0][g^1][G_{\kappa}]$ the $\Sigma^1_{n+1}$-uniformization property holds true for every $n \ge 1$.
\end{lemma}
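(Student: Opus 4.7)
The plan is to verify that the formulas built by our iteration really define the graphs of the required uniformizing functions. The base case $n=1$, that is $\Sigma^1_2$-uniformization, is a theorem of $\ZFC$ by Novikov--Kond\^o--Addison and needs nothing from our construction. The case $n=2$, namely $\Sigma^1_3$-uniformization, is handled by the based strategy of Section 5.1: for each G\"odelnumber $k$ of a $\Sigma^1_3$-formula $\varphi_k$, the bookkeeping has forced $\operatorname{Code}((x,y,k))$ where $y$ is the $<$-least witness to $\exists y\,\varphi_k(x,y)$ at an appropriate stage, so by Lemma \ref{nounwantedcodes} the $\Sigma^1_3$-formula $\exists a_0\,((x,y,k,a_0)\text{ is coded into }\vec{S})$ pins down exactly that $y$. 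For $n \ge 3$ I would use the formulas $\sigma_{\text{odd}}$ and $\sigma_{\text{even}}$ from Section 5.2; since the odd and even cases are dual, I would present only the odd one, writing $\varphi_m(x,y) = \exists a_0 \forall a_1 \cdots \exists a_{2k-2}\, \psi(x,y,a_0,\ldots,a_{2k-2})$ with $\psi \in \Pi^1_2$ and $\sigma := \sigma_{\text{odd}}$.

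Fix a real $x$ in the projection of $A_m$. If the distinguished trivial triple $(x,0,0)$ already witnesses $\varphi_m$, declare $0$ to be the uniformizing value; the convention $(y^0,a_0^0)=(0,0)$ absorbs this degenerate case. Otherwise let $\alpha \ge 1$ be the least ordinal in the fixed enumeration $(x, y^\beta, a_0^\beta)_{\beta < 2^{\aleph_0}}$ for which $\forall a_1 \exists a_2 \cdots \exists a_{2k-2}\, \psi(x, y^\alpha, a_0^\alpha, a_1, \ldots)$ holds in $L[g^0][g^1][G_\kappa]$. The claim to be proved is that $y^\alpha$ is the unique real with $\sigma(x, y^\alpha, m)$; this will exhibit $\sigma$ as the graph of a $\Sigma^1_{n+1}$-uniformization of $\varphi_m$.

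For existence, take $a_0 := a_0^\alpha$ as the witness for the outer $\exists a_0$ in $\sigma$: the first conjunct holds by the choice of $\alpha$, while the second conjunct, which negates the ``is not coded'' pattern, is precisely the conclusion of the second of the two lemmas immediately preceding this one. For uniqueness I would argue by contradiction: suppose $y' \ne y^\alpha$ and $\sigma(x, y', m)$ holds via some witness $a_0'$. Then $(x, y', a_0')$ appears in the enumeration at a unique index $\beta$. If $\beta < \alpha$, minimality of $\alpha$ forces the first conjunct of $\sigma$ to fail; if $\beta = \alpha$ then $(x, y', a_0') = (x, y^\alpha, a_0^\alpha)$ gives $y' = y^\alpha$, a contradiction; and if $\beta > \alpha$ the first of the two preceding lemmas produces $\forall a_1 \exists a_2 \cdots ((x,y',m,a_0',a_1,\ldots) \text{ is not coded})$, falsifying the second conjunct.

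The main potential obstacle is not the logical skeleton above, which is a clean recombination of the two preceding lemmas, but the bookkeeping: one must be sure that the enumeration used in Section 5.2 truly exhausts every triple of names of reals arising in $L[g^0][g^1][G_\kappa]$, and that the coherent family $\pi_\alpha^\beta$ of bijections lets the Case 1 / Case 4 coding rules correctly translate between infinite disjunctions indexed by $\eta < \alpha$ and a single coded tuple of the form $(x, y^\alpha, m, a_0^\alpha, b_1, \ldots)$. These are exactly the points that the two preceding lemmas are designed to discharge, so the remaining argument reduces to reading off $\sigma_{\text{odd}}$ (respectively $\sigma_{\text{even}}$) against those lemmas.
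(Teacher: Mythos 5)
Your proposal is correct and follows essentially the same route as the paper: after disposing of the degenerate $(x,0,0)$ case, you take the least $\alpha$ with $\forall a_1\exists a_2\cdots\psi(x,y^{\alpha},a_0^{\alpha},\ldots)$, verify $\sigma_{\text{odd}}$ at $(x,y^{\alpha})$ via the second preceding lemma, and rule out other solutions by the case split $\beta<\alpha$ (minimality kills the first conjunct) versus $\beta>\alpha$ (the penultimate lemma kills the second conjunct), exactly as in the paper's proof. Your explicit dispatching of $n=1$ by Kond\^o and $n=2$ by the based strategy of Section 5.1 together with Lemma \ref{nounwantedcodes} is a small completeness addition that the paper leaves implicit, but it does not change the argument.
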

\begin{proof}
Again we will only consider the case for the odd projective levels. Let $\varphi \equiv \exists a_0 \forall a_1 ...\exists a_{2n-2} (\psi (x,y,a_0,a_1,...,a_{2n-2}))$ be an arbitrary $\Sigma^1_{2n+1}$-formula in two free variables where $\psi$ is $\Pi^1_2$.
Let $x$ be a real such that there is a real $y $ with $L[G_{\kappa}] \models \varphi(x,y)$. We list all the triples $(x,y^{\alpha} ,a_0^{\alpha})$ according to our well-order $<$.
Remember that we defined $y^0, a_0^0$ to be both $0$. If $a_0^0$ witnesses that $\forall a_1 \exists a_2... (\psi (x,0,0,a_1,...)$ is true, which is a $\Pi^1_{2n}$-formula, then $\varphi (x,0)$ will be true in $L[g^0][g^1][G_{\kappa}]$ and is the value of our uniformizing function there.

If $a^0_0=0$ does not witness that $\forall a_1 \exists a_2...(\psi(x,0,0,a_1,a_2,...)$ is true, which is a $\Sigma^1_{2n}$-formula then 
let $\alpha> 0$ be least such that
\[ L[g^0][g^1][G_{\kappa}] \models \forall a_1... (\psi (x,y^{\alpha},a_0^{\alpha},a_1,...) ).\]
Then by the last Lemma
\[ L[g^0][g^1][G_{\kappa}] \models \exists a_1 \forall a_2... \forall a_{2n-2} ( (\# \psi,x,y^{\alpha},a^{\alpha}_0,...) \text{ is coded} ) \]
holds true. Note that this formula is $\Sigma^1_{2n+1}$.

Yet, by the penultimate Lemma, for each $\beta > \alpha$
\[ L[g^0][g^1][G_{\kappa}] \models\forall a_1 \exists a_2... \exists a_{2n-2} ((\#\psi,x,y^{\beta},a^{\beta}_0,a_1,..) \text{ is not coded}). \]

So $(x,y^{\alpha})$ is the unique pair satisfying the $\Sigma^1_{2n+1}$-formula
\begin{align*}
\exists a_0 ( (\forall a_1 \exists a_2...\exists a_{2n-2} ( \psi (x,y,a_0,...) \land \\
\lnot (\forall a_1 \exists a_2 ...\exists a_{2n-2} ((\#\psi,x,y,a_0,a_1,...a_{2n-2} \text{ is not coded} ) )
\end{align*}
Indeed, if $\beta > \alpha$, then the triple $(x,y^{\beta}, a_0^{\beta})$ can not satisfy the second sub-formula above. And if $\beta < \alpha$, then $(x,y^{\beta}, a_0^{\beta} )$ 
will not satisfy \[\forall a_1 \exists a_2...\exists a_{2n-2} \psi( x,y^{\beta}, a_0^{\beta}, a_1,...,a_{2n-2} )\] as $\alpha$ was chosen to be least.

So to summarize the $\Sigma^1_{2n+1}$-formula 
\begin{align*}
&\forall a_1 \exists a_2... (\psi (x,0,0,a_1,a_2,...) \lor \\&
(\exists a_1 \forall a_2...(\lnot \psi (x,0,0,a_1,a_2,...) \land
\exists a_0 ( (\forall a_1 \exists a_2...\exists a_{2n-2} ( \psi (x,y,a_0,...) \land \\&
\lnot (\forall a_1 \exists a_2 ...\exists a_{2n-2} ((\#\psi,x,y,a_0,a_1,...a_{2n-2} \text{ is not coded} ) ) )
\end{align*}
defines the uniformizing function for $\varphi$
\end{proof}

The next lemma is an immediate consequence of the fact that we define our well-order of the reals using the global $L$-well-order. In particular we will never be in a situation where at one stage $\beta_0$ of our iteration, we have $\dot{b}_0^{G_{\beta_0}} < \dot{b}_1^{G_{\beta_0}}$ and at a later stage $\beta_1 > \beta_0$ we have 
$\dot{b}_1^{G_{\beta_0}} > \dot{b}_0^{G_{\beta_0}}$. Thus, given two arbitrary reals $b_0$ and $b_1$ in $L [g^0][g^1][G_{\kappa}]$, either the pair $(b_0,b_1)$ got coded into $\vec{S}$ or the pair $(b_1,b_0)$ got coded into $\vec{S}$.
\begin{lemma}
In $L[g^0][g^1][G_{\kappa}]$ the reals have a $\Delta^1_3$-definable well-order via
\[ \text{$b_0 < b_1$ iff $(b_0,b_1)$ is coded into $\vec{S}$} \]
and
\[  \text{$b_1 < b_0$ iff $(b_1,b_0)$ is coded into $\vec{S}$}. \]
\end{lemma}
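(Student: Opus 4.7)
The plan is to verify two things about the relation on reals in $L[g^0][g^1][G_\kappa]$ defined by ``$(b_0,b_1)$ is coded into $\vec S$'': first, that it is $\Sigma^1_3$-definable on the nose, and second, that for any two distinct reals $b_0, b_1$ in $L[g^0][g^1][G_\kappa]$ exactly one of the two ordered pairs $(b_0,b_1)$, $(b_1,b_0)$ is coded. From these two facts the $\Delta^1_3$-definability, together with the well-order axioms, will follow essentially for free.

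The $\Sigma^1_3$ upper bound is immediate: the right-hand side of the biconditional is just $\Phi^3$ applied to a fixed recursive pairing of $b_0$ and $b_1$, and $\Phi^3$ is $\Sigma^1_3$ by Section~3.

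The main work is the dichotomy. For existence I would argue that both $b_0$ and $b_1$ live in some intermediate model $L[g^0][g^1][G_{\beta_0}]$, and the bookkeeping $F$ is chosen so that every pair of $\forceP_{\beta_0}$-names of reals is handed to the iteration at some later stage $\beta\geq\beta_0$; at that stage, the well-order subsection of the construction prescribes forcing with either $\operatorname{Code}(b_0,b_1)$ or $\operatorname{Code}(b_1,b_0)$ according to the $<_L$-comparison of the minimal names, so one of the two pairs ends up coded into $\vec S$. For uniqueness I would combine Lemma~\ref{nounwantedcodes} (no real is accidentally coded by the iteration) with the stability observation made in the paragraph preceding the lemma: the $<_L$-least $\forceP_\beta$-name realizing a given real $b_j$ is coherent as $\beta$ grows, so the rule ``compare minimal names in $<_L$'' does not flip. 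Consequently no stage of the iteration ever codes the opposite pair, and in particular $(b_0,b_1)$ and $(b_1,b_0)$ cannot both be coded.

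Given the dichotomy, the assertion ``$b_0<b_1$'' has two equivalent formulations: $\Phi^3(\langle b_0,b_1\rangle)$, which is $\Sigma^1_3$, and $b_0\neq b_1\wedge \neg\Phi^3(\langle b_1,b_0\rangle)$, which is $\Pi^1_3$. Hence $<$ is $\Delta^1_3$. Linearity and antisymmetry fall out of the dichotomy; transitivity and well-foundedness are inherited from the global $L$-well-order of names, since the comparison of two reals tracks the $<_L$-comparison of their minimal names. The step I expect to carry the most weight is the uniqueness half of the dichotomy, which rests on the stability of the canonical-name comparison across stages of the iteration; once that is in hand, the remainder is routine bookkeeping together with Lemma~\ref{nounwantedcodes}.
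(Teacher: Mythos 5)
Your proposal is correct and follows essentially the same route the paper takes: the paper leaves the lemma unproved except for the preceding remark that the $<_L$-comparison of minimal names never flips across stages, so (together with Lemma~\ref{nounwantedcodes}) exactly one of the pairs $(b_0,b_1)$, $(b_1,b_0)$ gets coded, and $\Delta^1_3$-ness follows since the complement of the relation is expressible via the opposite pair. Your write-up merely spells out these same ingredients (the $\Sigma^1_3$ upper bound via $\Phi^3$, the dichotomy from the bookkeeping plus the stability of the name comparison, and the $\Pi^1_3$ reformulation) in more detail than the paper does.
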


\end{document}